\begin{document}
\newcommand {\emptycomment}[1]{} 

\baselineskip=14pt
\newcommand{\nc}{\newcommand}
\newcommand{\delete}[1]{}
\nc{\mfootnote}[1]{\footnote{#1}} 
\nc{\todo}[1]{\tred{To do:} #1}

\delete{
\nc{\mlabel}[1]{\label{#1}}  
\nc{\mcite}[1]{\cite{#1}}  
\nc{\mref}[1]{\ref{#1}}  
\nc{\meqref}[1]{\ref{#1}} 
\nc{\mbibitem}[1]{\bibitem{#1}} 
}

\nc{\mlabel}[1]{\label{#1}  
{\hfill \hspace{1cm}{\bf{{\ }\hfill(#1)}}}}
\nc{\mcite}[1]{\cite{#1}{{\bf{{\ }(#1)}}}}  
\nc{\mref}[1]{\ref{#1}{{\bf{{\ }(#1)}}}}  
\nc{\meqref}[1]{\eqref{#1}{{\bf{{\ }(#1)}}}} 
\nc{\mbibitem}[1]{\bibitem[\bf #1]{#1}} 

\newtheorem{thm}{Theorem}[section]
\newtheorem{lem}[thm]{Lemma}
\newtheorem{cor}[thm]{Corollary}
\newtheorem{pro}[thm]{Proposition}
\theoremstyle{definition}
\newtheorem{defi}[thm]{Definition}
\newtheorem{ex}[thm]{Example}
\newtheorem{rmk}[thm]{Remark}
\newtheorem{pdef}[thm]{Proposition-Definition}
\newtheorem{condition}[thm]{Condition}

\renewcommand{\labelenumi}{{\rm(\alph{enumi})}}
\renewcommand{\theenumi}{\alph{enumi}}

\nc{\tred}[1]{\textcolor{red}{#1}}
\nc{\tblue}[1]{\textcolor{blue}{#1}}
\nc{\tgreen}[1]{\textcolor{green}{#1}}
\nc{\tpurple}[1]{\textcolor{purple}{#1}}
\nc{\btred}[1]{\textcolor{red}{\bf #1}}
\nc{\btblue}[1]{\textcolor{blue}{\bf #1}}
\nc{\btgreen}[1]{\textcolor{green}{\bf #1}}
\nc{\btpurple}[1]{\textcolor{purple}{\bf #1}}

\nc{\ld}[1]{\textcolor{purple}{Landry:#1}}
\nc{\hd}[1]{\textcolor{blue}{Damien:#1}}
\nc{\li}[1]{\textcolor{yellow}{#1}}
\nc{\lir}[1]{\textcolor{blue}{Li:#1}}


\nc{\twovec}[2]{\left(\begin{array}{c} #1 \\ #2\end{array} \right )}
\nc{\threevec}[3]{\left(\begin{array}{c} #1 \\ #2 \\ #3 \end{array}\right )}
\nc{\twomatrix}[4]{\left(\begin{array}{cc} #1 & #2\\ #3 & #4 \end{array} \right)}
\nc{\threematrix}[9]{{\left(\begin{matrix} #1 & #2 & #3\\ #4 & #5 & #6 \\ #7 & #8 & #9 \end{matrix} \right)}}
\nc{\twodet}[4]{\left|\begin{array}{cc} #1 & #2\\ #3 & #4 \end{array} \right|}

\nc{\rk}{\mathrm{r}}
\newcommand{\g}{\mathfrak g}
\newcommand{\h}{\mathfrak h}
\newcommand{\pf}{\noindent{$Proof$.}\ }
\newcommand{\frkg}{\mathfrak g}
\newcommand{\frkh}{\mathfrak h}
\newcommand{\Id}{\rm{Id}}
\newcommand{\gl}{\mathfrak {gl}}
\newcommand{\ad}{\mathrm{ad}}
\newcommand{\add}{\frka\frkd}
\newcommand{\frka}{\mathfrak a}
\newcommand{\frkb}{\mathfrak b}
\newcommand{\frkc}{\mathfrak c}
\newcommand{\frkd}{\mathfrak d}
\newcommand {\comment}[1]{{\marginpar{*}\scriptsize\textbf{Comments:} #1}}

\nc{\tforall}{\text{ for all }}

\nc{\svec}[2]{{\tiny\left(\begin{matrix}#1\\
#2\end{matrix}\right)\,}}  
\nc{\ssvec}[2]{{\tiny\left(\begin{matrix}#1\\
#2\end{matrix}\right)\,}} 

\nc{\typeI}{local cocycle $3$-Lie bialgebra\xspace}
\nc{\typeIs}{local cocycle $3$-Lie bialgebras\xspace}
\nc{\typeII}{double construction $3$-Lie bialgebra\xspace}
\nc{\typeIIs}{double construction $3$-Lie bialgebras\xspace}

\nc{\bia}{{$\mathcal{P}$-bimodule ${\bf k}$-algebra}\xspace}
\nc{\bias}{{$\mathcal{P}$-bimodule ${\bf k}$-algebras}\xspace}

\nc{\rmi}{{\mathrm{I}}}
\nc{\rmii}{{\mathrm{II}}}
\nc{\rmiii}{{\mathrm{III}}}
\nc{\pr}{{\mathrm{pr}}}
\newcommand{\huaA}{\mathcal{A}}

\nc{\OT}{constant $\theta$-}
\nc{\T}{$\theta$-}
\nc{\IT}{inverse $\theta$-}

\nc{\pll}{\beta}
\nc{\plc}{\epsilon}

\nc{\ass}{{\mathit{Ass}}}
\nc{\lie}{{\mathit{Lie}}}
\nc{\comm}{{\mathit{Comm}}}
\nc{\dend}{{\mathit{Dend}}}
\nc{\zinb}{{\mathit{Zinb}}}
\nc{\tdend}{{\mathit{TDend}}}
\nc{\prelie}{{\mathit{preLie}}}
\nc{\postlie}{{\mathit{PostLie}}}
\nc{\quado}{{\mathit{Quad}}}
\nc{\octo}{{\mathit{Octo}}}
\nc{\ldend}{{\mathit{ldend}}}
\nc{\lquad}{{\mathit{LQuad}}}

 \nc{\adec}{\check{;}} \nc{\aop}{\alpha}
\nc{\dftimes}{\widetilde{\otimes}} \nc{\dfl}{\succ} \nc{\dfr}{\prec}
\nc{\dfc}{\circ} \nc{\dfb}{\bullet} \nc{\dft}{\star}
\nc{\dfcf}{{\mathbf k}} \nc{\apr}{\ast} \nc{\spr}{\cdot}
\nc{\twopr}{\circ} \nc{\tspr}{\star} \nc{\sempr}{\ast}
\nc{\disp}[1]{\displaystyle{#1}}
\nc{\bin}[2]{ (_{\stackrel{\scs{#1}}{\scs{#2}}})}  
\nc{\binc}[2]{ \left (\!\! \begin{array}{c} \scs{#1}\\
    \scs{#2} \end{array}\!\! \right )}  
\nc{\bincc}[2]{  \left ( {\scs{#1} \atop
    \vspace{-.5cm}\scs{#2}} \right )}  
\nc{\sarray}[2]{\begin{array}{c}#1 \vspace{.1cm}\\ \hline
    \vspace{-.35cm} \\ #2 \end{array}}
\nc{\bs}{\bar{S}} \nc{\dcup}{\stackrel{\bullet}{\cup}}
\nc{\dbigcup}{\stackrel{\bullet}{\bigcup}} \nc{\etree}{\big |}
\nc{\la}{\longrightarrow} \nc{\fe}{\'{e}} \nc{\rar}{\rightarrow}
\nc{\dar}{\downarrow} \nc{\dap}[1]{\downarrow
\rlap{$\scriptstyle{#1}$}} \nc{\uap}[1]{\uparrow
\rlap{$\scriptstyle{#1}$}} \nc{\defeq}{\stackrel{\rm def}{=}}
\nc{\dis}[1]{\displaystyle{#1}} \nc{\dotcup}{\,
\displaystyle{\bigcup^\bullet}\ } \nc{\sdotcup}{\tiny{
\displaystyle{\bigcup^\bullet}\ }} \nc{\hcm}{\ \hat{,}\ }
\nc{\hcirc}{\hat{\circ}} \nc{\hts}{\hat{\shpr}}
\nc{\lts}{\stackrel{\leftarrow}{\shpr}}
\nc{\rts}{\stackrel{\rightarrow}{\shpr}} \nc{\lleft}{[}
\nc{\lright}{]} \nc{\uni}[1]{\tilde{#1}} \nc{\wor}[1]{\check{#1}}
\nc{\free}[1]{\bar{#1}} \nc{\den}[1]{\check{#1}} \nc{\lrpa}{\wr}
\nc{\curlyl}{\left \{ \begin{array}{c} {} \\ {} \end{array}
    \right .  \!\!\!\!\!\!\!}
\nc{\curlyr}{ \!\!\!\!\!\!\!
    \left . \begin{array}{c} {} \\ {} \end{array}
    \right \} }
\nc{\leaf}{\ell}       
\nc{\longmid}{\left | \begin{array}{c} {} \\ {} \end{array}
    \right . \!\!\!\!\!\!\!}
\nc{\ot}{\otimes} \nc{\sot}{{\scriptstyle{\ot}}}
\nc{\otm}{\overline{\ot}}
\nc{\ora}[1]{\stackrel{#1}{\rar}}
\nc{\ola}[1]{\stackrel{#1}{\la}}
\nc{\pltree}{\calt^\pl}
\nc{\epltree}{\calt^{\pl,\NC}}
\nc{\rbpltree}{\calt^r}
\nc{\scs}[1]{\scriptstyle{#1}} \nc{\mrm}[1]{{\rm #1}}
\nc{\dirlim}{\displaystyle{\lim_{\longrightarrow}}\,}
\nc{\invlim}{\displaystyle{\lim_{\longleftarrow}}\,}
\nc{\mvp}{\vspace{0.5cm}} \nc{\svp}{\vspace{2cm}}
\nc{\vp}{\vspace{8cm}} \nc{\proofbegin}{\noindent{\bf Proof: }}
\nc{\proofend}{$\blacksquare$ \vspace{0.5cm}}
\nc{\freerbpl}{{F^{\mathrm RBPL}}}
\nc{\sha}{{\mbox{\cyr X}}}  
\nc{\ncsha}{{\mbox{\cyr X}^{\mathrm NC}}} \nc{\ncshao}{{\mbox{\cyr
X}^{\mathrm NC,\,0}}}
\nc{\shpr}{\diamond}    
\nc{\shprm}{\overline{\diamond}}    
\nc{\shpro}{\diamond^0}    
\nc{\shprr}{\diamond^r}     
\nc{\shpra}{\overline{\diamond}^r}
\nc{\shpru}{\check{\diamond}} \nc{\catpr}{\diamond_l}
\nc{\rcatpr}{\diamond_r} \nc{\lapr}{\diamond_a}
\nc{\sqcupm}{\ot}
\nc{\lepr}{\diamond_e} \nc{\vep}{\varepsilon} \nc{\labs}{\mid\!}
\nc{\rabs}{\!\mid} \nc{\hsha}{\widehat{\sha}}
\nc{\lsha}{\stackrel{\leftarrow}{\sha}}
\nc{\rsha}{\stackrel{\rightarrow}{\sha}} \nc{\lc}{\lfloor}
\nc{\rc}{\rfloor}
\nc{\tpr}{\sqcup}
\nc{\nctpr}{\vee}
\nc{\plpr}{\star}
\nc{\rbplpr}{\bar{\plpr}}
\nc{\sqmon}[1]{\langle #1\rangle}
\nc{\forest}{\calf}
\nc{\altx}{\Lambda_X} \nc{\vecT}{\vec{T}} \nc{\onetree}{\bullet}
\nc{\Ao}{\check{A}}
\nc{\seta}{\underline{\Ao}}
\nc{\deltaa}{\overline{\delta}}
\nc{\trho}{\tilde{\rho}}

\nc{\rpr}{\circ}
\nc{\dpr}{{\tiny\diamond}}
\nc{\rprpm}{{\rpr}}

\nc{\mmbox}[1]{\mbox{\ #1\ }} \nc{\ann}{\mrm{ann}}
\nc{\Aut}{\mrm{Aut}} \nc{\can}{\mrm{can}}
\nc{\twoalg}{{two-sided algebra}\xspace}
\nc{\colim}{\mrm{colim}}
\nc{\Cont}{\mrm{Cont}} \nc{\rchar}{\mrm{char}}
\nc{\cok}{\mrm{coker}} \nc{\dtf}{{R-{\rm tf}}} \nc{\dtor}{{R-{\rm
tor}}}
\renewcommand{\det}{\mrm{det}}
\nc{\depth}{{\mrm d}}
\nc{\Div}{{\mrm Div}} \nc{\End}{\mrm{End}} \nc{\Ext}{\mrm{Ext}}
\nc{\Fil}{\mrm{Fil}} \nc{\Frob}{\mrm{Frob}} \nc{\Gal}{\mrm{Gal}}
\nc{\GL}{\mrm{GL}} \nc{\Hom}{\mrm{Hom}} \nc{\hsr}{\mrm{H}}
\nc{\hpol}{\mrm{HP}} \nc{\id}{\mrm{id}} \nc{\im}{\mrm{im}}
\nc{\incl}{\mrm{incl}} \nc{\length}{\mrm{length}}
\nc{\LR}{\mrm{LR}} \nc{\mchar}{\rm char} \nc{\NC}{\mrm{NC}}
\nc{\mpart}{\mrm{part}} \nc{\pl}{\mrm{PL}}
\nc{\ql}{{\QQ_\ell}} \nc{\qp}{{\QQ_p}}
\nc{\rank}{\mrm{rank}} \nc{\rba}{\rm{RBA }} \nc{\rbas}{\rm{RBAs }}
\nc{\rbpl}{\mrm{RBPL}}
\nc{\rbw}{\rm{RBW }} \nc{\rbws}{\rm{RBWs }} \nc{\rcot}{\mrm{cot}}
\nc{\rest}{\rm{controlled}\xspace}
\nc{\rdef}{\mrm{def}} \nc{\rdiv}{{\rm div}} \nc{\rtf}{{\rm tf}}
\nc{\rtor}{{\rm tor}} \nc{\res}{\mrm{res}} \nc{\SL}{\mrm{SL}}
\nc{\Spec}{\mrm{Spec}} \nc{\tor}{\mrm{tor}} \nc{\Tr}{\mrm{Tr}}
\nc{\mtr}{\mrm{sk}}

\nc{\ab}{\mathbf{Ab}} \nc{\Alg}{\mathbf{Alg}}
\nc{\Algo}{\mathbf{Alg}^0} \nc{\Bax}{\mathbf{Bax}}
\nc{\Baxo}{\mathbf{Bax}^0} \nc{\RB}{\mathbf{RB}}
\nc{\RBo}{\mathbf{RB}^0} \nc{\BRB}{\mathbf{RB}}
\nc{\Dend}{\mathbf{DD}} \nc{\bfk}{{\bf k}} \nc{\bfone}{{\bf 1}}
\nc{\base}[1]{{a_{#1}}} \nc{\detail}{\marginpar{\bf More detail}
    \noindent{\bf Need more detail!}
    \svp}
\nc{\Diff}{\mathbf{Diff}} \nc{\gap}{\marginpar{\bf
Incomplete}\noindent{\bf Incomplete!!}
    \svp}
\nc{\FMod}{\mathbf{FMod}} \nc{\mset}{\mathbf{MSet}}
\nc{\rb}{\mathrm{RB}} \nc{\Int}{\mathbf{Int}}
\nc{\Mon}{\mathbf{Mon}}
\nc{\remarks}{\noindent{\bf Remarks: }}
\nc{\OS}{\mathbf{OS}} 
\nc{\Rep}{\mathbf{Rep}}
\nc{\Rings}{\mathbf{Rings}} \nc{\Sets}{\mathbf{Sets}}
\nc{\DT}{\mathbf{DT}}

\nc{\BA}{{\mathbb A}} \nc{\CC}{{\mathbb C}} \nc{\DD}{{\mathbb D}}
\nc{\EE}{{\mathbb E}} \nc{\FF}{{\mathbb F}} \nc{\GG}{{\mathbb G}}
\nc{\HH}{{\mathbb H}} \nc{\LL}{{\mathbb L}} \nc{\NN}{{\mathbb N}}
\nc{\QQ}{{\mathbb Q}} \nc{\RR}{{\mathbb R}} \nc{\BS}{{\mathbb{S}}} \nc{\TT}{{\mathbb T}}
\nc{\VV}{{\mathbb V}} \nc{\ZZ}{{\mathbb Z}}


\nc{\calao}{{\mathcal A}} \nc{\cala}{{\mathcal A}}
\nc{\calc}{{\mathcal C}} \nc{\cald}{{\mathcal D}}
\nc{\cale}{{\mathcal E}} \nc{\calf}{{\mathcal F}}
\nc{\calfr}{{{\mathcal F}^{\,r}}} \nc{\calfo}{{\mathcal F}^0}
\nc{\calfro}{{\mathcal F}^{\,r,0}} \nc{\oF}{\overline{F}}
\nc{\calg}{{\mathcal G}} \nc{\calh}{{\mathcal H}}
\nc{\cali}{{\mathcal I}} \nc{\calj}{{\mathcal J}}
\nc{\call}{{\mathcal L}} \nc{\calm}{{\mathcal M}}
\nc{\caln}{{\mathcal N}} \nc{\calo}{{\mathcal O}}
\nc{\calp}{{\mathcal P}} \nc{\calq}{{\mathcal Q}} \nc{\calr}{{\mathcal R}}
\nc{\calt}{{\mathcal T}} \nc{\caltr}{{\mathcal T}^{\,r}}
\nc{\calu}{{\mathcal U}} \nc{\calv}{{\mathcal V}}
\nc{\calw}{{\mathcal W}} \nc{\calx}{{\mathcal X}}
\nc{\CA}{\mathcal{A}}

\nc{\fraka}{{\mathfrak a}} \nc{\frakB}{{\mathfrak B}}
\nc{\frakb}{{\mathfrak b}} \nc{\frakd}{{\mathfrak d}}
\nc{\oD}{\overline{D}}
\nc{\frakF}{{\mathfrak F}} \nc{\frakg}{{\mathfrak g}}
\nc{\frakm}{{\mathfrak m}} \nc{\frakM}{{\mathfrak M}}
\nc{\frakMo}{{\mathfrak M}^0} \nc{\frakp}{{\mathfrak p}}
\nc{\frakS}{{\mathfrak S}} \nc{\frakSo}{{\mathfrak S}^0}
\nc{\fraks}{{\mathfrak s}} \nc{\os}{\overline{\fraks}}
\nc{\frakT}{{\mathfrak T}}
\nc{\oT}{\overline{T}}
\nc{\frakX}{{\mathfrak X}} \nc{\frakXo}{{\mathfrak X}^0}
\nc{\frakx}{{\mathbf x}}
\nc{\frakTx}{\frakT}      
\nc{\frakTa}{\frakT^a}        
\nc{\frakTxo}{\frakTx^0}   
\nc{\caltao}{\calt^{a,0}}   
\nc{\ox}{\overline{\frakx}} \nc{\fraky}{{\mathfrak y}}
\nc{\frakz}{{\mathfrak z}} \nc{\oX}{\overline{X}}

\font\cyr=wncyr10

\nc{\al}{\alpha}
\nc{\lam}{\lambda}
\nc{\lr}{\longrightarrow}
\newcommand{\K}{\mathbb {K}}
\newcommand{\A}{\rm A}


\title[Relative (pre-)anti-flexible algebras and 
associated algebraic structures]{Relative
(pre-)anti-flexible algebras and 
associated algebraic structures}

\author[Mafoya Landry Dassoundo]{Mafoya Landry Dassoundo
}
\address[]{Chern Institute of Mathematics
\& LPMC, Nankai University, Tianjin 300071, China}
\email{
dassoundo@yahoo.com}

\date{\today}

\begin{abstract}
Pre-anti-flexible 
family algebras are introduced  and linked 
with the  notions of 
relative anti-flexible algebras, 
left and right pre-Lie family algebras
and relative Lie algebras which are for
mostly newly defined.
Relative pre-anti-flexible algebras are 
given and their underlying 
algebras structures such as 
pre-anti-flexible family algebras,
left and right pre-Lie family algebras, 
and other are investigated and significant identities 
linking those introduced structures are derived.
In addition, a generalization of the Rota-Baxter operators 
defined on a relative anti-flexible algebra  is 
introduced and both Rota-Baxter operators
and its generalization are used to build
relative pre-anti-flexible algebras structures underlying 
relative anti-flexible algebras
and related consequences are derived.
\end{abstract}

\subjclass[2010]{17A30; 16W99; 05E16}

\keywords{anti-flexible family algebra,
pre-anti-flexible family algebra,
relative anti-flexible algebra,
relative  pre-anti-flexible algebra,
relative Lie algebra,
Rota-Baxter family operators}

\maketitle

\tableofcontents

\numberwithin{equation}{section}

\tableofcontents

\numberwithin{equation}{section}

\allowdisplaybreaks

\section{Introduction and preliminaries}
The most best known, 
well geometrically interpreted  and 
deeply investigated 
algebras classes are 
associative algebras, Lie algebras and 
Poisson algebras. 
On one hand, it is worth to  notice 
that these  algebraic classes 
 (associative, Lie and Poisson) 
are not sufficient to describe, build  and-or characterize
all other algebraic classes.
On the other hand, it is advisable to establish links
among the most best known algebras
in the literature above cited, to other
 algebraic classes such as 
left-symmetric algebras (\cite{Vinberg}), 
anti-flexible algebras (\cite{Rodabaugh, Dassoundo_B_H}), 
dendriform algebras (\cite{Loday}),... etc 
in order to hold them otherwise for deep investigations.

Rota-Baxter operators, since their 
introductions (\cite{Baxter, Rota1, Rota2}) 
continue to reveal 
they usefulness in the  
process of the
construction of  new algebraic structures, of the 
splitting algebraic structures 
and many  other use.
More precisely, defending a 
Rota-Baxter operator
$R_B: A\rightarrow A$  on an associative 
algebra  $(A, \cdot)$,
then according to \cite{Aguiar1}, 
there is  a dendriform algebra 
structure $\prec, \succ: A\times A\rightarrow A$
underlying $A$  given by, for any 
$x,y\in A$, 
\begin{eqnarray}\label{eq:preantiflexible_under_rota}
x\succ y:= R_B(x)\cdot y,\;\;\;\;\;
x\prec y:=x\cdot R_B(y),
\end{eqnarray}
i.e. the linear products
$\prec, \succ:A\times A\rightarrow A$
above defined satisfying 
the following relations, 
for any $x,y, z\in A$,
\begin{subequations}
\begin{eqnarray}
(x\succ y)\prec z-x\succ (y\prec z)=0,
\end{eqnarray}
\begin{eqnarray}
(x\succ y+x\prec y)\succ z-x\succ(y\succ z)=0,
\end{eqnarray}
\begin{eqnarray}
x\prec(y\succ z+y\prec z)-(x\prec y)\prec z=0,
\end{eqnarray}
\end{subequations}
 if and only if the linear map $R_B: A\rightarrow A$
is a zero weight 
Rota-Baxter operator, that is $R_B$ satisfying for any $x,y\in A,$
\begin{eqnarray}\label{eq:zero_weight_RB}
R_B(x)\cdot R_B(y)=R_B(R_B(x)\cdot y)+
R_B(x\cdot R_B(y)).
\end{eqnarray}
Similarly, it is well known from \cite{Dassoundo} that, 
given a Rota-Baxter operator 
(zero weight Rota-Baxter operator)
defined on anti-flexible algebra $(A, \cdot)$, there is 
a pre-anti-flexible algebra structure 
$\prec, \succ: A\times A\rightarrow A$
underlying the vector space $A$, that is for any 
$x,y, z\in A$ the following relations are satisfied,
\begin{subequations}
\begin{eqnarray}\label{eq:pre-anti-flexible-rel1}
(x\succ y)\prec z-x\succ (y\prec z)=
(z\succ y)\prec x-z\succ (y\prec x),
\end{eqnarray}
\begin{eqnarray}\label{eq:pre-anti-flexible-rel2}
(x\succ y+x\prec y)\succ z-x\succ(y\succ z)=
z\prec(y\succ x+y\prec x)-(z\prec y)\prec x.
\end{eqnarray}
\end{subequations}
In addition, above result is extended i.e. 
 given a generalized Rota-Baxter operator
defined on and anti-flexible algebra $(A, \cdot)$ 
which is a linear map
$G_{RB}: A\rightarrow A$ satisfying for any 
$x,y,z\in A,$
\begin{eqnarray}
&&(G_{RB}(G_{RB}(x)\cdot y+x\cdot G_{RB}(y))
-G_{RB}(x)\cdot G_{RB}(y))\cdot z+\cr 
&&z\cdot(G_{RB}(y)\cdot G_{RB}(x)-
G_{RB}(G_{RB}(y)\cdot x+y\cdot G_{RB}(x) ))=0,
\end{eqnarray}
there is pre-anti-flexible algebra structure
$\prec, \succ: A\times A\rightarrow A$ defined on $A$ by
Eq.~\eqref{eq:preantiflexible_under_rota}
for which $R_B$ is $G_{RB}.$
Besides, dendriform and (di-)tri-algebras were 
introduced and related to Rota-Baxter operators and 
associated consequences were derived (\cite{Ebrahimi-Fard}).
Moreover, it is well known  
(from \cite{Gubarev_Kolesnikov}) that 
Koszul duality  of operad governing 
(di-)trialgebras is corresponding to 
 operad governing  variety of 
(di-)tridendriform algebras which are embedded 
to zero's weight Rota-Baxter 
algebra.
Furthermore, it is 
opportune to notice that 
 a general operadic definition for 
the notion of splitting 
algebraic structures is equivalent with
some Manin products of operads
which are closely related to Rota-Baxter
operators (\cite{Bai_Bellier_Guo_Ni}).
More generally,  splitting 
algebraic operations procedure in any algebraic operad theory were 
uniformed and linked to the introduced notion of 
Rota-Baxter operators on 
operads (\cite{Pei_Bai_Guo}) and
many other  results built from Rota-Baxter 
algebras were surveyed in \cite{Guo1} 
and the references therein.

The notion of operated semi-group was introduced 
to build some algebraic 
structures on combinatoric elements   
mainly the binary rooted trees. The most relevant examples 
are the construction of free Rota-Baxter algebras in terms of 
Motzkin paths and the planar rooted trees (\cite{Guo})
and the  used of the typed decorated trees theory for 
describing  the combinatorial
species (\cite{Bergeron_L_L}).
Given a (non)associative $\K$ (field of characteristic zero) algebra $A$,  
a Rota-Baxter family operators of weight $\lambda\; (\lambda\in \K)$ 
is the family of linear maps $P_{\omega}:A \rightarrow A$, 
where $\omega\in \Omega$ which is an associative semi-group,
and satisfying for any $x,y\in A$ and for any 
$\alpha, \beta \in \Omega$, 
\begin{eqnarray}\label{eq:Rota-Bater-Family-Algebra-weiged}
P_{\alpha}(x)P_{\beta}(y)=
P_{\alpha\beta}(xP_{\beta}(y))+
P_{\alpha\beta}(P_{\alpha}(x)y)+
\lambda P_{\alpha\beta}(xy).
\end{eqnarray}
Rota-Baxter family operators theory takes its origins in
renormalization theory of quantum 
field theory (\cite[page 591]{Fard_Bondia_Patras}). 
Recently, free (non)commutative Rota-Baxter family
was introduced and linked to (tri)dendriform 
family algebras (\cite{Zhang_Gao}).
Moreover,  Rota-Baxter family algebras indexed by associative 
semi-group were introduced and 
shown that they amount to an ordinary 
Rota-Baxter algebras structures on the tensor product
with associative semi-group algebras. Similar results 
were established with (tri)dendriform family algebras
and finally free dendriform family algebras were built 
in terms of typed decorated planar
binary trees and generalized 
 on free tridendriform 
family algebras (\cite{Zhang_Gao_Manchon}), and more generally, 
the notion of $\Omega$-dendriform structures
were introduced and it also was proved that 
nonassociative structures on 
typed binary trees were unify 
and generalized (\cite{Foissy}).
Similarly, pre-Lie family algebras and 
their freeness, which are their underlying 
(tri)dendriform family algebras, were introduced and 
related typed decorated trees were built and related 
generalization were also derived~(\cite{Manchon_Zhang}). 

Throughout this article, we notice that
$\Omega$ is an associative semi-group and 
$\Omega_c$ is a commutative associative semi-group, 
any considered algebra is defined as a finite dimensional 
over a field of characteristic zero and this
 despite the fact that some of results hold 
independently of the dimension of the considered 
algebra on which they were established.
We will end this introductory section by describing 
the content  flowchart in substance 
of this paper as follows.
In section~\ref{section2}, we introduce
the notion of pre-anti-flexible family algebras,
establish their relations with dendriform
family algebras and built its underlying
relative anti-flexible algebras as well as
relative Lie algebras, left and right
pre-Lie family algebras and 
related consequences are derived.
In section~\ref{section3}, 
relative pre-anti-flexible algebras
are introduced and viewed as a generalization 
of the relative dendriform algebras and their 
underlying relative pre-anti-flexible algebras, 
relative pre-Lie and right pre-Lie algebras, 
relative Lie algebras and other structures 
are derived. In addition, we show that there is 
 pre-anti-flexible family algebra, 
left and right pre-Lie family algebras underlying
a given relative pre-Lie algebra.
In section~\ref{section4}, we prove that the zero's
weight Rota-Baxter family operators defined on 
a relative  anti-flexible algebra and its generalization 
induce a relative pre-anti-flexible algebra structure.
It is moreover proved,
under some assumptions on relative 
anti-flexible algebra  that a Rota-Baxter family operator
defined on the underlying relative Lie algebra of 
the relative anti-flexible algebra  
induces relative pre-anti-flexible algebra structure.
\section{Pre-anti-flexible family  
algebras and related family algebras}\label{section2}
In this section, pre-anti-flexible family algebras 
are introduce and related consequences are derived.
Associated family algebraic structures
and relative algebras structure
are built in detail. 
\begin{defi}
A pre-anti-flexible family algebra is the quadruple
$(A, \prec_{\omega}, \succ_{\omega}, \Omega_c)$ such that
 $A$ is a vector space equipped with 
two operations families
$\prec_{\alpha}, \succ_{\alpha}: A\times A\rightarrow A$ 
for each $\alpha \in \Omega_c$
and satisfying 
for any $x,y,z \in A$, for all $\alpha, \beta\in \Omega_c$,
\begin{subequations}
\begin{eqnarray}\label{eq:preanti1}
(x\succ_{\alpha}y)\prec_{\beta}z-x\succ_{\alpha}(y\prec_{\beta} z)=
(z\succ_{\beta}y)\prec_{\alpha}x-z\succ_{\beta}(y\prec_{\alpha} x),
\end{eqnarray}
\begin{eqnarray}\label{eq:preanti2}
(x\succ_{\alpha}y+x\prec_{\beta} y)\succ_{\alpha\beta}z -
x\succ_{\alpha} (y \succ_{\beta}z)=
(z\prec_{\beta}y)\prec_{\alpha}x-
z\prec_{\beta\alpha}(y\succ_{\beta}x+y\prec_{\alpha}x).
\end{eqnarray}
\end{subequations}
\end{defi}

\begin{rmk}
If the LHS and the RHS of each of Eq.~\eqref{eq:preanti1} and 
Eq.~\eqref{eq:preanti2} are zero, 
then  pre-anti-flexible family algebra  is
dendriform family algebra 
(\cite{Zhang_Gao, Manchon_Zhang, Zhang_Gao_Manchon}).
Thus, pre-anti-flexible family algebra
can be considered as a generalization of dendriform family algebra.
\end{rmk}
\begin{defi}
 An $\Omega_c$-relative anti-flexible  algebra is a vector space $A$
 equipped with the operations families
 $\cdot_{_{\alpha,\beta}}:A\times A\rightarrow A$
 for each couple $(\alpha, \beta) \in \Omega_c^2$ 
 and satisfying, for any $x,y,z\in A$, 
  for any $\alpha, \beta, \gamma\in\Omega_c$,
 \begin{eqnarray}\label{eq:identityantiflexible}
 (x\cdot_{_{\alpha, \beta}} y)\cdot_{_{\alpha\beta, \gamma}}z
 +z\cdot_{_{\gamma, \beta\alpha}}(y\cdot_{_{\beta, \alpha}}x)-
 (z\cdot_{_{\gamma, \beta}} y)\cdot_{_{\gamma\beta, \alpha}} x-
 x\cdot_{_{\alpha, \beta\gamma}}(y\cdot_{_{\beta, \gamma}}z)=0,
 \end{eqnarray}
equivalently
\begin{eqnarray}\label{eq:associatorid}
(x,y, z)_{_{\alpha, \beta, \gamma}}=(z,y, x)_{_{\gamma, \beta, \alpha}},
\end{eqnarray}
where, for any $x,y,z\in A$ and for any $\alpha, \beta, \gamma\in \Omega_c$,
\begin{eqnarray}\label{eq:associator}
(x,y, z)_{_{\alpha, \beta, \gamma}}:=
 (x\cdot_{_{\alpha, \beta}} y)\cdot_{_{\alpha\beta, \gamma}}z-
 x\cdot_{_{\alpha, \beta\gamma}}(y\cdot_{_{\beta, \gamma}}z).
\end{eqnarray}
In the following of this paper, an $\Omega_c$-relative 
anti-flexible  algebra   above  defined will simply denote by
$(A, \cdot_{_{\omega_1, \omega_2}}, \Omega_c)$ or by $A$ if 
there is no other consideration.
\end{defi}

\begin{thm}\label{thm:underlingrelaticeantiflexible}
Let $(A, \prec_{\omega}, \succ_{\omega}, \Omega_c)$
be a pre-anti-flexible family algebra,
defining for all $\alpha, \beta\in \Omega_c$ and for any $x,y\in A$, 
\begin{eqnarray}\label{eq:underlyinganti}
x\ast_{_{\alpha, \beta}}y= x\succ_{\alpha} y+ x\prec_{\beta} y
\end{eqnarray}
determines  an $\Omega_c$-relative anti-flexible algebra
structure on $A$.   
\end{thm}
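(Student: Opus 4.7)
The plan is to show directly that, with $\ast_{\alpha,\beta} := \succ_\alpha + \prec_\beta$, the associator identity \eqref{eq:associatorid} holds, by expanding both sides into their eight constituent $\succ/\prec$-terms and matching them in three groups using \eqref{eq:preanti1} and \eqref{eq:preanti2} (invoking commutativity of $\Omega_c$ so that $\alpha\beta=\beta\alpha$, etc.).

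First I would unfold $(x,y,z)_{\alpha,\beta,\gamma}$ by writing
\[
(x\ast_{\alpha,\beta}y)\ast_{\alpha\beta,\gamma}z - x\ast_{\alpha,\beta\gamma}(y\ast_{\beta,\gamma}z)
\]
as a signed sum of eight terms
$T_1,\dots,T_8$, where
$T_1=(x\succ_\alpha y)\succ_{\alpha\beta}z$, $T_2=(x\prec_\beta y)\succ_{\alpha\beta}z$, $T_3=(x\succ_\alpha y)\prec_\gamma z$, $T_4=(x\prec_\beta y)\prec_\gamma z$,
and $T_5,\dots,T_8$ the four negative terms coming from $x\ast_{\alpha,\beta\gamma}(y\ast_{\beta,\gamma}z)$. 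Symmetrically, I would unfold $(z,y,x)_{\gamma,\beta,\alpha}$ into $S_1,\dots,S_8$, obtained from the $T_i$ by the substitution $x\leftrightarrow z$, $\alpha\leftrightarrow\gamma$.

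Second, I would split the comparison into three matchings:
\begin{itemize}
\item[(i)] Apply \eqref{eq:preanti1} with the substitution $\beta\mapsto\gamma$ to obtain $T_3+T_6 = S_3+S_6$ (the two ``mixed $\succ\prec$'' terms on each side).
\item[(ii)] Apply \eqref{eq:preanti2} as written to obtain $T_1+T_2+T_5 = S_4+S_7+S_8$ (here one uses $\beta\alpha=\alpha\beta$).
\item[(iii)] Apply \eqref{eq:preanti2} with the role swap $x\leftrightarrow z$, $\alpha\leftrightarrow\gamma$ to obtain $S_1+S_2+S_5 = T_4+T_7+T_8$.
\end{itemize}
Summing (i)--(iii) gives $T_1+\cdots+T_8 = S_1+\cdots+S_8$, which is exactly \eqref{eq:associatorid}.

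The main obstacle is not conceptual but bookkeeping: one must check that every one of the eight terms on each side is accounted for exactly once, and that the subscripts produced by applying \eqref{eq:preanti1}--\eqref{eq:preanti2} really match those coming from the expansion of $\ast_{\alpha\beta,\gamma}$ and $\ast_{\alpha,\beta\gamma}$. The commutativity of $\Omega_c$ is essential in step (ii), since \eqref{eq:preanti2} produces terms of the form $z\prec_{\beta\alpha}(\cdots)$, which need to coincide with $z\prec_{\beta\gamma}(\cdots)$-style contributions only after identifying $\beta\alpha=\alpha\beta$; analogously for step (iii). Once the index bookkeeping is verified, the identity \eqref{eq:identityantiflexible} follows, and therefore $(A,\ast_{\omega_1,\omega_2},\Omega_c)$ is an $\Omega_c$-relative anti-flexible algebra.
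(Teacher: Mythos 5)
Your proposal is correct and follows essentially the same route as the paper's proof: the paper likewise expands the associator into eight terms, groups them into exactly your three blocks (the left-hand side of \eqref{eq:preanti2}, the right-hand side of \eqref{eq:preanti2} under the swap $x\leftrightarrow z$, $\alpha\leftrightarrow\gamma$, and the left-hand side of \eqref{eq:preanti1} with $\beta\mapsto\gamma$), applies the two axioms, and reassembles the result into $(z,y,x)_{_{\gamma,\beta,\alpha}}$. The only quibble is that the index bookkeeping actually closes up without ever invoking $\beta\alpha=\alpha\beta$ --- the subscripts produced by \eqref{eq:preanti2} already coincide with those appearing in the expansion of $(z,y,x)_{_{\gamma,\beta,\alpha}}$ --- so commutativity of $\Omega_c$ is part of the ambient hypotheses rather than an essential step of this particular verification.
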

\begin{proof}
Let $x,y,z \in A$ and $\alpha, \beta, \gamma\in \Omega_c$. We have 
\begin{eqnarray*}
(x,y,z)_{_{\alpha, \beta, \gamma}}&=&
(x\ast_{_{\alpha, \beta}} y)\ast_{_{\alpha\beta, \gamma}}z-
 x\ast_{_{\alpha, \beta\gamma}}(y\ast_{_{\beta, \gamma}}z)\cr
 &=&
 (x\succ_{\alpha} y+ x\prec_{\beta} y)\succ_{_{\alpha\beta}}z+
 (x\succ_{\alpha} y+ x\prec_{\beta} y)\prec_{{\gamma}}z\cr
 &-&
 x\succ_{\alpha}(y\succ_{\beta} z+y\prec_{\gamma} z)
 -x\prec_{_{\beta\gamma}}(y\succ_{\beta} z+y\prec_{\gamma} z)\cr
 &=&
\{(x\succ_{\alpha} y+ x\prec_{\beta} y)\succ_{_{\alpha\beta}}z
- x\succ_{\alpha}(y\succ_{\beta} z\}\cr
&+&
\{
(x\prec_{\beta} y)\prec_{{\gamma}}z
-x\prec_{_{\beta\gamma}}(y\succ_{\beta} z+y\prec_{\gamma} z)
\}\cr&+& 
\{(x\succ_{\alpha} y)\prec_{{\gamma}}z
 -x\succ_{\alpha}(y\prec_{\gamma} z)\}\cr 
 &=&
\{(z\prec_{\beta}y)\prec_{\alpha}x
-z\prec_{\beta\alpha}(y\succ_{\beta}x+y\prec_{\alpha}x)\}\cr
&+&
\{(z\succ_{\gamma} y+z\prec_{\beta}y)\succ_{_{\gamma\beta}}x
-z\succ_{\gamma}(y\succ_{\beta}x)
\}\cr&+& 
\{(z\succ_{\gamma} y)\prec_{{\alpha}}x
 -z\succ_{\gamma}(y\prec_{\alpha} x)\}\cr 
 &=&
 (z\succ_{\gamma} y+z\prec_{\beta}y)\succ_{_{\gamma\beta}}x+
 (z\succ_{\gamma} y+z\prec_{\beta}y)\prec_{\alpha}x\cr 
 &-&
z\succ_{\gamma}(y\succ_{\beta}x+y\prec_{\alpha} x)
-z\prec_{\beta\alpha}(y\succ_{\beta}x+y\prec_{\alpha}x)\cr
 (x,y,z)_{_{\alpha, \beta, \gamma}}&=&
 (z, y, x)_{_{\gamma, \beta, \alpha}}.
\end{eqnarray*}
Therefore, the linear product defined by Eq.~\eqref{eq:underlyinganti}
satisfies Eq.~\eqref{eq:associatorid}, thus it
confers to $A$  an $\Omega_c$-relative anti-flexible  algebra structure.
\end{proof}
\begin{thm}
Let $A$ be a $\mathbf{k}$ vector space,
$\Omega_c$ is a commutative associative semi-group.
Consider the linear products 
$\prec, \succ: A\otimes\mathbf{k}\Omega_c\times A\otimes\mathbf{k}\Omega_c
\rightarrow A\otimes\mathbf{k}\Omega_c$ defined on 
$A\otimes\mathbf{k}\Omega_c$. The triple 
$(A\otimes\mathbf{k}\Omega_c, \prec, \succ)$ 
is a pre-anti-flexible algebra if and only if 
$(A, \prec_{\omega}, \succ_{\omega})$ is a
pre-anti-flexible family algebra, where for any 
$x,y,\in A$ and for any $\alpha, \beta\in \Omega_c$, 
\begin{subequations}
\begin{eqnarray}\label{eq:product1}
(x\otimes \alpha)\prec (y\otimes \beta):=
(x\prec_{\beta} y)\otimes \alpha\beta, 
\end{eqnarray}
\begin{eqnarray}\label{eq:product2}
(x\otimes \alpha)\succ (y\otimes \beta):=
(x\succ_{\alpha}y)\otimes \alpha\beta.
\end{eqnarray}
\end{subequations}
\end{thm}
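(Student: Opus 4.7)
\bigskip

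\noindent\textbf{Proof proposal.} The plan is to check both pre-anti-flexible axioms on $A\otimes\mathbf{k}\Omega_c$ by reducing, via bilinearity, to pure tensors $X=x\otimes\alpha$, $Y=y\otimes\beta$, $Z=z\otimes\gamma$ and then reading each axiom off as a single family identity. Because both $\prec$ and $\succ$ are defined via \eqref{eq:product1}–\eqref{eq:product2} as ``apply the $\omega$-indexed operation in $A$, then multiply the semigroup labels,'' every monomial occurring in the axioms is of the shape $(\,\cdots\,)\otimes\alpha^{i}\beta^{j}\gamma^{k}$ for some reshuffling of the three labels attached to $x$, $y$, $z$. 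The commutativity of $\Omega_c$ is exactly what guarantees all the monomials end up in the same semigroup degree $\alpha\beta\gamma=\gamma\beta\alpha$, so that after stripping the tensor factor we are left with an honest identity on $A$.

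First I would verify the forward implication. Assuming $(A,\prec_\omega,\succ_\omega,\Omega_c)$ is a pre-anti-flexible family algebra, I expand the left-hand side of axiom \eqref{eq:pre-anti-flexible-rel1} on $X,Y,Z$ and obtain $\bigl((x\succ_{\alpha} y)\prec_{\gamma} z - x\succ_{\alpha}(y\prec_{\gamma} z)\bigr)\otimes\alpha\beta\gamma$; the right-hand side similarly produces $\bigl((z\succ_{\gamma} y)\prec_{\alpha} x - z\succ_{\gamma}(y\prec_{\alpha} x)\bigr)\otimes\gamma\beta\alpha$. Applying commutativity of $\Omega_c$ to identify the two tensor factors, the equality reduces to \eqref{eq:preanti1} with the middle label of $Y$ playing no role — exactly as expected, since $y$'s semigroup label does not enter any subscripted operation. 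The same scheme is carried out for axiom \eqref{eq:pre-anti-flexible-rel2}: both sides produce a common semigroup factor $\alpha\beta\gamma=\gamma\beta\alpha$, and the residual identity on $A$ is \eqref{eq:preanti2}.

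Conversely, I would invoke the fact that $\{a\otimes\omega\mid a\in A,\ \omega\in\Omega_c\}$ spans $A\otimes\mathbf{k}\Omega_c$ and that distinct semigroup elements $\omega$ give linearly independent components, so each pre-anti-flexible identity on pure tensors encodes the corresponding family identity pointwise in $(\alpha,\beta,\gamma)$. Since $\alpha,\beta,\gamma$ are arbitrary in $\Omega_c$, the family identities \eqref{eq:preanti1}–\eqref{eq:preanti2} follow from the pre-anti-flexible identities on $A\otimes\mathbf{k}\Omega_c$. Combining both directions gives the equivalence.

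I expect the main obstacle to be purely bookkeeping: the six monomials on each side of \eqref{eq:preanti2} mix $\succ_\alpha,\succ_\beta,\succ_{\alpha\beta}$ with $\prec_\beta,\prec_\alpha,\prec_{\beta\alpha}$, and one must be scrupulous about which semigroup label travels with which factor when applying \eqref{eq:product1}–\eqref{eq:product2}. The only conceptual ingredient beyond bookkeeping is commutativity of $\Omega_c$, which is used precisely to align $\alpha\beta\gamma$ with $\gamma\beta\alpha$; without it the two sides of each axiom would sit in different graded components and the equivalence would fail.
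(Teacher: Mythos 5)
Your proposal is correct and follows essentially the same route as the paper: expand both pre-anti-flexible axioms on pure tensors $x\otimes\alpha$, $y\otimes\beta$, $z\otimes\gamma$, observe that every term carries the common label $\alpha\beta\gamma=\gamma\beta\alpha$ by commutativity of $\Omega_c$, and strip the tensor factor to recover \eqref{eq:preanti1}--\eqref{eq:preanti2}, with bilinearity and the spanning of pure tensors handling both implications. The only cosmetic difference is that you make explicit the linear-independence/stripping step for the converse, which the paper leaves implicit.
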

\begin{proof}
Let $x,y,z\in A$ and $\alpha, \beta, \gamma \in \Omega_c.$
We have
\begin{equation*}
((x\otimes \alpha)\succ (y\otimes \beta))\prec (z\otimes \gamma)-
(x\otimes\alpha)\succ ((y\otimes \beta)\prec (z\otimes \gamma))=
((x\succ_{\alpha}y)\prec_{\gamma}z-
x\succ_{\alpha}(y\prec_{\gamma}z))\otimes\alpha\beta\gamma,
\end{equation*}
\begin{eqnarray*}
((x\otimes \alpha)\succ(y\otimes \beta)+
(x\otimes \alpha)\prec(y\otimes \beta))\succ (z\otimes \gamma)-
(x\otimes\alpha)\succ ((y\otimes \beta)\succ(z\otimes\gamma))\cr
=
(
(x\succ_{\alpha}y+
x\prec_{\beta}y)\succ_{\alpha\beta}z-
x\succ_{\alpha}(y\succ_{\beta}z)
)\otimes \alpha\beta\gamma,
\end{eqnarray*}
According  the commutativity of $\Omega_c$, we have  
\begin{equation*}
((z\otimes \gamma)\succ (y\otimes \beta))\prec (x\otimes \alpha)-
(z\otimes\gamma)\succ ((y\otimes \beta)\prec (x\otimes \alpha))=
((z\succ_{\gamma}y)\prec_{\alpha}x-
z\succ_{\gamma}(y\prec_{\alpha}x))\otimes\alpha\beta\gamma,
\end{equation*}
and finally 
\begin{eqnarray*}
((z\otimes  \gamma)\prec(y\otimes\beta))\prec (x\otimes\alpha)-
(z\otimes \gamma)\prec ((y\otimes \beta)\succ(x\otimes\alpha)
+(y\otimes\beta)\prec(x\otimes\alpha) )\\=
(
(z\prec_{\beta}y)\prec_{\alpha}x-
z\prec_{\beta\alpha}(y\succ_{\beta}x+y\prec_{\alpha}x)
)\otimes \alpha\beta\gamma.
\end{eqnarray*}
Hence, if  $(A\otimes\mathbf{k}\Omega_c, \prec, \succ)$ is 
a pre-anti-flexible algebra then
$(A, \prec_{\omega}, \succ_{\omega})$ is a
pre-anti-flexible family algebra too.

Conversely, if $(A, \prec_{\omega}, \succ_{\omega})$ is a
pre-anti-flexible family algebra such that its  product 
is derived by Eq.~\eqref{eq:product1} and Eq.~\eqref{eq:product2},
then $(A\otimes\mathbf{k}\Omega_c, \prec, \succ)$ is 
a pre-anti-flexible algebra.
\end{proof}
\begin{defi}(\cite{Manchon_Zhang})
A left pre-Lie family algebra is a vector space $A$ 
together with binary operations  
$\triangleright_{\omega}:A\times A\rightarrow A$
 for $\omega\in \Omega_c$,
 such that for any 
$x,y,z\in A$ and for any $\alpha, \beta \in \Omega_c$,
\begin{eqnarray}\label{eq:leftpreLieid}
(x\triangleright_{\alpha}y)\triangleright_{\alpha\beta}z-
x\triangleright_{\alpha}(y\triangleright_{\beta} z)=
(y\triangleright_{\beta}x)\triangleright_{\beta\alpha}z-
y\triangleright_{\beta}(x\triangleright_{\alpha}z).
\end{eqnarray}
\end{defi}
\begin{defi}
A right pre-Lie family algebra is a vector space $A$ 
equipped with binary operations 
$\triangleleft_{\omega}:A\times A\rightarrow A$ 
for $\omega\in \Omega_c$, such that for any 
$x,y,z\in A$ and for any $\alpha, \beta \in \Omega_c$,
\begin{eqnarray}\label{eq:rightpreLieid}
x\triangleleft_{\alpha\beta}(y\triangleleft_{\beta} z)-
(x\triangleleft_{\alpha}y)\triangleleft_{\beta}z=
x\triangleleft_{\beta\alpha}(z\triangleleft_{\alpha}y)-
(x\triangleleft_{\beta}z)\triangleleft_{\alpha}y.
\end{eqnarray}
\end{defi}

\begin{thm}
Let $(A, \prec_{\omega}, \succ_{\omega}, \Omega_c)$ be 
a pre-anti-flexible family algebra, defining the binary 
operations by, for all $x,y\in A$,
\begin{subequations}
\begin{eqnarray}\label{eq:prodleftpreLie}
x\triangleright_{\omega} y:=x\succ_{\omega}y-y\prec_{\omega}x,\;
\forall \omega\in \Omega_c,
\end{eqnarray}
\begin{eqnarray}\label{eq:prodrightpreLie}
x\triangleleft_{\omega} y:=x\prec_{\omega}y-y\succ_{\omega}x,\;
\forall \omega\in \Omega_c,
\end{eqnarray}
\end{subequations}
then $(A, \triangleright_{\omega}, \Omega)$ is a left pre-Lie family algebra and 
$(A, \triangleleft_{\omega}, \Omega)$ is a right pre-Lie family algebra.
\end{thm}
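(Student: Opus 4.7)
The plan is to verify the left pre-Lie family identity \eqref{eq:leftpreLieid} (and the right analogue \eqref{eq:rightpreLieid}) by direct substitution and careful bookkeeping, eliminating terms via the two defining axioms \eqref{eq:preanti1} and \eqref{eq:preanti2} together with the commutativity $\alpha\beta=\beta\alpha$ in $\Omega_c$.

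First I would fix $x,y,z\in A$ and $\alpha,\beta\in\Omega_c$, substitute \eqref{eq:prodleftpreLie} into the expression $(x\triangleright_\alpha y)\triangleright_{\alpha\beta}z-x\triangleright_\alpha(y\triangleright_\beta z)-(y\triangleright_\beta x)\triangleright_{\beta\alpha}z+y\triangleright_\beta(x\triangleright_\alpha z)$, and expand. This produces sixteen terms, eight on each side, of the eight possible shapes built from the symbols $\succ_\omega$ and $\prec_\omega$ on a triple. I would then split these into three blocks: the pure block $(\ast\succ\ast)\succ\ast$ and $\ast\succ(\ast\succ\ast)$; the mixed block involving $(\ast\prec\ast)\succ\ast$ and $\ast\prec(\ast\succ\ast)$, $\ast\prec(\ast\prec\ast)$; and the residual block containing $\ast\succ(\ast\prec\ast)$ and $(\ast\succ\ast)\prec\ast$.

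Next I would apply \eqref{eq:preanti2} in two instances — once on the triple $(x,y,z)$ with parameters $(\alpha,\beta)$, and once on $(y,x,z)$ with $(\beta,\alpha)$. After rewriting the $\succ$\,-\,$\succ$ combinations from the pure block, and using $\alpha\beta=\beta\alpha$ to match indices, these outputs cancel against the mixed block term by term. What survives is exactly
\[
x\succ_\alpha(z\prec_\beta y)-(x\succ_\alpha z)\prec_\beta y-y\succ_\beta(z\prec_\alpha x)+(y\succ_\beta z)\prec_\alpha x,
\]
which vanishes by \eqref{eq:preanti1} applied with $y$ and $z$ interchanged. This closes the left pre-Lie verification.

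The right pre-Lie identity for $(A,\triangleleft_\omega,\Omega_c)$ follows by a symmetric computation: expand using \eqref{eq:prodrightpreLie}, apply \eqref{eq:preanti2} to eliminate the $\prec$\,-\,$\prec$ block against the mixed $\succ$-$\prec$ terms, and use \eqref{eq:preanti1} to kill what remains. The main obstacle is purely combinatorial bookkeeping: with sixteen starting terms, the cancellations are transparent only if one groups terms by shape from the outset and tracks the semigroup subscripts carefully, since a single mislabelled $\alpha\beta$ versus $\beta\alpha$ would obscure the fact that commutativity of $\Omega_c$ is precisely what makes the two applications of \eqref{eq:preanti2} align.
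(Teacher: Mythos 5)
Your proposal is correct and follows essentially the same route as the paper: expand the pre-Lie associator-type expression, absorb the $\succ$-$\succ$ combinations via two instances of Eq.~\eqref{eq:preanti2} (on $(x,y,z;\alpha,\beta)$ and $(y,x,z;\beta,\alpha)$, using commutativity of $\Omega_c$ to match the subscripts), and kill the surviving residual $x\succ_\alpha(z\prec_\beta y)-(x\succ_\alpha z)\prec_\beta y-y\succ_\beta(z\prec_\alpha x)+(y\succ_\beta z)\prec_\alpha x$ by Eq.~\eqref{eq:preanti1} with $y$ and $z$ interchanged, exactly as the paper does. The right pre-Lie case is the symmetric computation, as you indicate.
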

\begin{proof}
Let $x,y,z\in A$ and $\alpha, \beta\in \Omega_c$. We have 
\begin{eqnarray*}
(x\triangleright_{\alpha}y)\triangleright_{\alpha\beta}z-
x\triangleright_{\alpha}(y\triangleright{_\beta}z)
&=&
(x\succ_{\alpha}y-y\prec_{\alpha}x)\triangleright_{\alpha\beta}z-
x\triangleright_{\alpha}(y\succ_{\beta}z-z\prec_{\beta} y)\cr 
&=&
(x\succ_{\alpha}y-y\prec_{\alpha}x)\succ_{\alpha\beta}z-
z\prec_{\alpha\beta}(x\succ_{\alpha}y-y\prec_{\alpha}x)
\cr&-&
x\succ_{\alpha}(y\succ_{\beta}z-z\prec_{\beta} y)+
(y\succ_{\beta}z-z\prec_{\beta} y)\prec_{\alpha}x 
\cr&=&
\{
(x\succ_{\alpha}y)\succ_{\alpha\beta}z
-x\succ_{\alpha}(y\succ_{\beta}z)
-(z\prec_{\beta}y)\prec_{\alpha}x\cr
&-&z\prec_{\alpha\beta}(y\prec_{\alpha}x)\}
-(y\prec_{\alpha}x)\succ_{\alpha\beta}z
-z\prec_{\alpha\beta}(x\succ_{\alpha} y)\cr
&+&x\succ_{\alpha}(z\prec_{\beta}y)
+(y\succ_{\beta}z)\prec_{x}
\cr&=&
-(x\prec_{\beta}y)\succ_{\alpha\beta}z-
z\prec_{\alpha\beta}(y\succ_{\beta}x)-
(y\prec_{\alpha}x)\succ_{\alpha\beta}z
\cr&-&
z\prec_{\alpha\beta}(x\succ_{\alpha}y)+
x\succ_{\alpha}(z\prec_{\beta}y)+
(y\succ_{\beta}z)\prec_{\alpha}x\cr
&=&-(x\prec_{\beta}y+y\prec_{\alpha}x)\succ_{\alpha\beta}z
-z\prec_{\alpha\beta}(x\succ_{\alpha}y+y\succ_{\beta}x)\cr 
&+&
(y\succ_{\beta}z)\prec_{\alpha}x+x\succ_{\alpha}(z\prec_{\beta}y)\cr
(x\triangleright_{\alpha}y)\triangleright_{\alpha\beta}z-
x\triangleright_{\alpha}(y\triangleright{_\beta}z)
&=&
(y\triangleright_{\beta}x)\triangleright_{\beta\alpha}z-
y\triangleright_{\beta}(x\triangleright_{\alpha}z).
\end{eqnarray*}
Note that the third equal sign above upwards  
is due to Eq.~\eqref{eq:preanti2} 
while the last equal sign one is 
due to  and Eq.~\eqref{eq:preanti1}.
Therefore, $(A, \triangleright_{\omega}, \Omega_c)$
is a left pre-Lie family algebra.

Besides, we have
\begin{eqnarray*}
(x\triangleleft_{\alpha}y)\triangleleft_{\beta} z
-x\triangleleft_{\alpha\beta}(y\triangleleft_{\beta}z)
&=&
(x\prec_{\alpha}y-y\succ_{\alpha}x)\triangleleft_{\beta}z
-x\triangleleft_{\alpha\beta}(y\prec_{\beta}z-z\succ_{\beta}y)
\cr 
&=&
(x\prec_{\alpha}y-y\succ_{\alpha}x)\prec_{\beta}z
-z\succ_{\beta}(x\prec_{\alpha}y-y\succ_{\alpha}x)
\cr&-&
x\prec_{\alpha\beta}(y\prec_{\beta}z-z\succ_{\beta}y)
+(y\prec_{\beta}z-z\succ_{\beta}y)\succ_{\alpha\beta}x\cr 
&=&
\{(x\prec_{\alpha}y)\prec_{\beta}z
-x\prec_{\alpha\beta}(y\prec_{\beta}z)
+z\succ_{\beta}(y\succ_{\alpha}x)\cr
&-&(z\succ_{\beta}y)\succ_{\alpha\beta}x
\}
-(y\succ_{\alpha}x)\prec_{\beta}z
-\succ_{\beta}(x\prec_{\alpha}y)\cr 
&+&
x\prec_{\alpha\beta}(z\succ_{\beta}y)
+(y\prec_{\beta}z)\succ_{\alpha\beta} x\cr 
&=&
x\prec_{\alpha\beta}(y\succ_{\alpha}z+z\succ_{\beta}y)
+(y\prec{\beta}z+z\prec_{\alpha}y)\succ_{\alpha\beta}x
\cr&-&
\{
(y\succ_{\alpha}x)\prec_{\beta}z+
z\succ_{\beta}(x\prec_{\alpha}y)
\}\cr
&=&
x\prec_{\alpha\beta}(z\succ_{\beta}y+y\succ_{\alpha}z)
+(z\prec_{\alpha}y+y\prec{\beta}z)\succ_{\alpha\beta}x
\cr&-&
\{
(z\succ_{\beta}(x\prec_{\alpha}y+
y\succ_{\alpha}x)\prec_{\beta}z)
\}\cr
(x\triangleleft_{\alpha}y)\triangleleft_{\beta} z
-x\triangleleft_{\alpha\beta}(y\triangleleft_{\beta}z)
&=&
(x\triangleleft_{\beta}z)\triangleleft_{\alpha}y-
x\triangleleft_{\beta\alpha}(z\triangleleft_{\alpha}y).
\end{eqnarray*}
Note that the third equal sign upwards in above 
relations is due to Eq.~\eqref{eq:preanti2} 
while the last equal sign one is 
due to  and Eq.~\eqref{eq:preanti1}.
Therefore, $(A, \triangleleft_{\omega}, \Omega_c)$
is a right pre-Lie family algebra.
\end{proof}


\begin{defi}
 An $\Omega_c$-relative Lie algebra is a vector space $A$ such that for any pair 
 $(\alpha, \beta)\in \Omega_c$ there is a  
 operation $[\cdot,\cdot]_{\alpha, \beta}:A\otimes A \rightarrow A$  satisfying, 
 for all $x,y,z\in A$, and for all $\alpha, \beta, \gamma\in \Omega_c$, 
 \begin{subequations}
 \begin{eqnarray}\label{eq:LieFamilyskew}
 [x,y]_{_{\alpha, \beta}}+[y,x]_{_{\beta, \alpha}}=0, 
 \end{eqnarray}
 \begin{eqnarray}\label{eq:LieFamilyJacobi}
 [[x,y]_{_{\alpha, \beta}}, z]_{_{\alpha\beta, \gamma}}+
 [[y, z]_{_{\beta, \gamma}}, x]_{_{\beta\gamma, \alpha}}+
 [[z, x]_{_{\gamma, \alpha}}, y]_{_{\gamma\alpha, \beta}}
 =0.
 \end{eqnarray}
 \end{subequations}
This    relative Lie algebra will simply denoted by 
$(A, [\cdot, \cdot ]_{_{\omega_1, \omega_2}}, \Omega_c)$.
\end{defi}
\begin{thm}\label{theorelativeantiflexible-Lie}
Let $(A, \prec_{\omega}, \succ_{\omega}, \Omega_c)$
be a pre-anti-flexible family algebra, 
defining for any $x,y\in A$ and for any $\alpha, \beta\in \Omega,$
\begin{eqnarray}\label{eq:commutatorAntiflexible}
[x,y]_{_{\alpha, \beta}}=x\ast_{_{\alpha, \beta}}y-
y\ast_{_{\beta, \alpha}} x
=(x\succ_{\alpha}y+x\prec_{\beta}y)-
(y\succ_{\beta}x+y\prec_{\alpha}x),
\end{eqnarray}
then  $(A, [\cdot, \cdot ]_{_{\alpha, \beta}}, \Omega_c)$ is an 
$\Omega_c$-relative Lie algebra.
\end{thm}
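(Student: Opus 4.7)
The plan is to verify the two defining relations of an $\Omega_c$-relative Lie algebra separately, invoking Theorem~\ref{thm:underlingrelaticeantiflexible} to reduce the Jacobi identity to the relative anti-flexible identity for $\ast_{_{\omega_1,\omega_2}}$.

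Skew-symmetry is immediate from the definition: since $[x,y]_{_{\alpha,\beta}} = x\ast_{_{\alpha,\beta}}y - y\ast_{_{\beta,\alpha}}x$, swapping the arguments and the indices yields $[y,x]_{_{\beta,\alpha}} = y\ast_{_{\beta,\alpha}}x - x\ast_{_{\alpha,\beta}}y = -[x,y]_{_{\alpha,\beta}}$, which is \eqref{eq:LieFamilyskew}.

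For the Jacobi identity \eqref{eq:LieFamilyJacobi}, I would substitute the definition of $[\cdot,\cdot]_{_{\omega_1,\omega_2}}$ into $[[x,y]_{_{\alpha,\beta}}, z]_{_{\alpha\beta,\gamma}} + [[y,z]_{_{\beta,\gamma}}, x]_{_{\beta\gamma,\alpha}} + [[z,x]_{_{\gamma,\alpha}}, y]_{_{\gamma\alpha,\beta}}$ and expand, obtaining twelve quadratic-in-$\ast$ monomials. Using the commutativity of $\Omega_c$ (so that, e.g., $\beta\alpha = \alpha\beta$, $\gamma\alpha\beta = \alpha\beta\gamma$, and so on), I would group these twelve monomials into six associators of the form $(u,v,w)_{_{\xi,\eta,\zeta}}$ defined by \eqref{eq:associator}. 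Concretely, the two terms containing the submonomial $x\ast_{_{\alpha,\beta}}y$ assemble into $+(x,y,z)_{_{\alpha,\beta,\gamma}}$, and cyclic rotation gives $+(y,z,x)_{_{\beta,\gamma,\alpha}}$ and $+(z,x,y)_{_{\gamma,\alpha,\beta}}$; the remaining six terms, which come from the ``reversed'' halves of each commutator, assemble into $-(y,x,z)_{_{\beta,\alpha,\gamma}} - (z,y,x)_{_{\gamma,\beta,\alpha}} - (x,z,y)_{_{\alpha,\gamma,\beta}}$.

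At this point, Theorem~\ref{thm:underlingrelaticeantiflexible} does the rest: since $\ast_{_{\omega_1,\omega_2}}$ defines an $\Omega_c$-relative anti-flexible algebra structure on $A$, the identity \eqref{eq:associatorid} yields $(x,y,z)_{_{\alpha,\beta,\gamma}} = (z,y,x)_{_{\gamma,\beta,\alpha}}$, $(y,z,x)_{_{\beta,\gamma,\alpha}} = (x,z,y)_{_{\alpha,\gamma,\beta}}$, and $(z,x,y)_{_{\gamma,\alpha,\beta}} = (y,x,z)_{_{\beta,\alpha,\gamma}}$, so the three positive associators cancel exactly against the three negative ones, establishing \eqref{eq:LieFamilyJacobi}.

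The only mildly delicate step will be the bookkeeping of indices when grouping the twelve terms, where commutativity of $\Omega_c$ is used repeatedly to match subscripts such as $\beta\alpha\gamma$ with $\alpha\beta\gamma$; once this accounting is laid out cleanly, the argument is a direct transcription of the classical fact that the commutator of an anti-flexible algebra is Lie, and no obstacle beyond notation is expected.
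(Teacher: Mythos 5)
Your proposal is correct and follows essentially the same route as the paper: skew-symmetry is checked directly from the definition, and the Jacobi identity is obtained by expanding the twelve $\ast$-monomials, regrouping them (using commutativity of $\Omega_c$) into the six associators $(x,y,z)_{_{\alpha,\beta,\gamma}}+(y,z,x)_{_{\beta,\gamma,\alpha}}+(z,x,y)_{_{\gamma,\alpha,\beta}}-(z,y,x)_{_{\gamma,\beta,\alpha}}-(x,z,y)_{_{\alpha,\gamma,\beta}}-(y,x,z)_{_{\beta,\alpha,\gamma}}$, and cancelling them in pairs via Theorem~\ref{thm:underlingrelaticeantiflexible} and Eq.~\eqref{eq:associatorid}. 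Your pairing of the positive and negative associators matches the paper's argument exactly, so no further comment is needed.
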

\begin{proof}
Let $x,y,z\in A$. For any $\alpha, \beta, \gamma \in \Omega_c$, 
\begin{itemize}
\item Firstly, we have
\begin{eqnarray*}
[x,y]_{_{\alpha, \beta}}+[y, x]_{_{\beta, \alpha}}=
x\ast_{_{\alpha, \beta}}y-y\ast_{_{\beta, \alpha}} x+
y\ast_{_{\beta, \alpha}} x-x\ast_{_{\alpha, \beta}}y=0.
\end{eqnarray*}
Thus,  Eq.~\eqref{eq:LieFamilyskew} is satisfied.
\item Secondly, we have
\begin{eqnarray*}
[[x,y]_{_{\alpha, \beta}}, z]_{_{\alpha\beta, \gamma}}+
 [[y, z]_{_{\beta, \gamma}}, x]_{_{\beta\gamma, \alpha}}+
 [[z, x]_{_{\gamma, \alpha}}, y]_{_{\gamma\alpha, \beta}}
&=&
 (x\ast_{_{\alpha,\beta}}y)\ast_{_{\alpha\beta, \gamma}}z-
z\ast_{_{\gamma,\alpha\beta}}(x\ast_{_{\alpha,\beta}}y)
\cr &-&
(y\ast_{_{\beta,\alpha}}x)\ast_{_{\alpha\beta,\gamma}}z+
z\ast_{_{\gamma,\alpha\beta}}(y\ast_{_{\beta,\alpha}}x) 
 \cr &+&
(y\ast_{_{\beta,\gamma}}z)\ast_{_{\beta\gamma,\alpha}}x-
x\ast_{_{\alpha,\beta\gamma}}(y\ast_{_{\beta,\gamma}}z)
 \cr &-&
(z\ast_{_{\gamma,\beta}}y)\ast_{_{\beta\gamma,\alpha}}x+
x\ast_{_{\alpha,\beta\gamma}}(z\ast_{_{\gamma,\beta}}y)
 \cr &+&
(z\ast_{_{\gamma,\alpha}}x)\ast_{_{\gamma\alpha,\beta}}y-
y\ast_{_{\beta,\gamma\alpha}}(z\ast_{_{\gamma,\alpha}}x) 
\cr &-&
(x\ast_{_{\alpha,\gamma}}z)\ast_{_{\gamma\alpha,\beta}}y+
y\ast_{_{\beta,\gamma\alpha}}(x\ast_{_{\alpha,\gamma}}z)\cr
&=&
(x,y,z)_{_{\alpha,\beta, \gamma}}
+(y,z,x)_{_{\beta,\gamma,\alpha}}
+(z,x,y)_{_{\gamma, \alpha, \beta}}\cr&-&
(z, y ,z)_{_{\gamma,\beta,\alpha}}
-(x, z, y)_{_{\alpha,\gamma,\beta}}-
(y, x, z)_{_{\beta,\alpha,\gamma}}.
\end{eqnarray*}
The last equal sign above is due to
the cmmutativity of $\Omega_c$.
Thus, according to Theorem~\ref{thm:underlingrelaticeantiflexible} and  Eq.~\eqref{eq:associatorid}, the 
$\Omega_c$-relative Jacobi  identity i.e. Eq.~\eqref{eq:LieFamilyJacobi} is 
satisfied.
\end{itemize}
Therefore,
$(A, [\cdot, \cdot ]_{_{\alpha, \beta}}, \Omega_c)$ is an 
$\Omega_c$-relative Lie algebra.
\end{proof}
\begin{pro}
Let $(A, \prec_{\omega}, \succ_{\omega}, \Omega_c)$
be a pre-anti-flexible family algebra,
the following linear products family,
for all $\alpha, \beta\in \Omega_c$ and for any $x,y\in A,$
\begin{eqnarray}\label{eq:productcomm}
[x,y]_{_{\alpha, \beta}}:= x\triangleright_{\alpha} y
- y\triangleright_{\beta} x,
\end{eqnarray}
where "$\triangleright$" is given by 
Eq.~\eqref{eq:prodleftpreLie}, 
turns $A$ into an $\Omega_c$-relative Lie algebra, which is the 
same $\Omega_c$-relative Lie algebra given in
Theorem~\ref{theorelativeantiflexible-Lie}.
\end{pro}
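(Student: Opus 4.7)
The plan is to observe that the new bracket coincides with the one already studied in Theorem~\ref{theorelativeantiflexible-Lie}, and then appeal to that theorem. In other words, I do not intend to verify skew-symmetry and the relative Jacobi identity from scratch; instead I will reduce the statement to an identification of two bracket formulas.

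First I would unfold the right-hand side of Eq.~\eqref{eq:productcomm} using Eq.~\eqref{eq:prodleftpreLie}, namely $x\triangleright_\omega y = x\succ_\omega y - y\prec_\omega x$. Substituting and regrouping the four terms according to whether they contain a $\succ$ or a $\prec$ on $x$ versus on $y$, one computes
\begin{eqnarray*}
x\triangleright_\alpha y - y\triangleright_\beta x
&=& (x\succ_\alpha y - y\prec_\alpha x) - (y\succ_\beta x - x\prec_\beta y)\\
&=& (x\succ_\alpha y + x\prec_\beta y) - (y\succ_\beta x + y\prec_\alpha x)\\
&=& x\ast_{_{\alpha,\beta}} y - y\ast_{_{\beta,\alpha}} x,
\end{eqnarray*}
where the last equality uses the definition Eq.~\eqref{eq:underlyinganti} of $\ast_{_{\alpha,\beta}}$. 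This is exactly the bracket introduced in Eq.~\eqref{eq:commutatorAntiflexible}.

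Having identified the two brackets, the conclusion is immediate: by Theorem~\ref{theorelativeantiflexible-Lie}, the bracket $x\ast_{_{\alpha,\beta}} y - y\ast_{_{\beta,\alpha}} x$ endows $A$ with an $\Omega_c$-relative Lie algebra structure, so the same is true of the bracket defined by Eq.~\eqref{eq:productcomm}, and the two $\Omega_c$-relative Lie algebras coincide term by term.

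There is no genuine obstacle here: the argument is a one-line rearrangement of four summands, and the content of the proposition is essentially that the two natural ways of writing the commutator (directly via $\ast$, or via the left pre-Lie product $\triangleright$) produce the same relative Lie bracket. The only point requiring a small amount of care is the bookkeeping of the indices $\alpha,\beta$ on $\succ$ and $\prec$, so that the regrouping matches Eq.~\eqref{eq:underlyinganti} rather than an index-swapped variant; this is where commutativity of $\Omega_c$ is not needed, since each term keeps its original index throughout the manipulation.
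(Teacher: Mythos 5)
Your proof is correct and follows exactly the same route as the paper: unfold $\triangleright_\omega$ via Eq.~\eqref{eq:prodleftpreLie}, regroup the four terms to recognize $x\ast_{_{\alpha,\beta}}y-y\ast_{_{\beta,\alpha}}x$ from Eq.~\eqref{eq:underlyinganti}, and invoke Theorem~\ref{theorelativeantiflexible-Lie}. The index bookkeeping is handled correctly, so nothing further is needed.
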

\begin{proof}
Let $x,y\in A$ and $\alpha, \beta\in \Omega_c$. We have
\begin{eqnarray*}
[x,y]_{_{\alpha, \beta}}
&:=&x\triangleright_{\alpha}y-y\triangleright_{\beta}x
=x\succ_{\alpha}y-y\prec_{\alpha}x
-y\succ_{\beta}x+x\prec_{\beta}y\cr 
&=&(x\succ_{\alpha}y+x\prec_{\beta}y)
-(y\succ_{\beta}x+y\prec_{\alpha}x)
=x\ast_{\alpha, \beta}y-y\ast_{\beta, \alpha}x.
\end{eqnarray*}
According to Theorem~\ref{theorelativeantiflexible-Lie}, 
we deduce that the product given by Eq.~\eqref{eq:productcomm} 
is the same $\Omega_c$-relative Lie algebra built in 
Theorem~\ref{theorelativeantiflexible-Lie}.
\end{proof}
\begin{rmk}
Let us notice that for any $x,y\in A $  and for 
any $\alpha, \beta\in \Omega_c$, 
\begin{eqnarray*}
 x\triangleleft_{\alpha}y-y\triangleleft_{\beta}x&=&
 x\prec_{\alpha}y-y\succ_{\alpha}x
 -y\prec_{\beta}x+x\succ_{\beta}y
 = ( x\prec_{\alpha}y+x\succ_{\beta}y)
 -(y\succ_{\alpha}x+y\prec_{\beta}x)
\cr
 x\triangleleft_{\alpha}y-y\triangleleft_{\beta}x
&=&x\ast_{\beta, \alpha}y-y\ast_{\alpha, \beta}x
=[x, y]_{_{\beta, \alpha}}.
\end{eqnarray*}
Clearly, the $\Omega_c$-relative Lie algebra
underlying pre-anti-family algebra is that 
given by the commutator of Eq.~\eqref{eq:prodrightpreLie}
and that of the commutator of
Eq.~\eqref{eq:underlyinganti}.
\end{rmk}

\begin{thm}\label{thm_anticommutatorantiflexible}
Let $(A, \prec_{\omega}, \succ_{\omega}, \Omega_c)$
be a pre-anti-flexible family algebra such that its
underlying $\Omega_c$-relative 
anti-flexible algebra is 
$(A, \ast_{\omega_1, \omega_2}, \Omega_c)$.
The linear product given by for any 
$x,y\in A$ and for any $\alpha, \beta \in \Omega_c$, 
\begin{eqnarray}
x\circ_{_{\alpha, \beta}} y=x\ast_{\alpha,\beta}y+y\ast_{\beta, \alpha} x, 
\end{eqnarray}
is such that the family algebra $(A, \circ_{\omega_1, \omega_2}, \Omega_c)$ 
satisfying the following relation, 
for any $x,y,z\in A$ and for any  $\alpha, \beta, \gamma\in \Omega_c$,
\begin{eqnarray}
(x,y,z)_{{_\circ}_{_{\alpha, \beta, \gamma}}}=
[y, [x,z]_{_{\alpha, \gamma}}]_{_{\beta, \alpha\gamma}}, 
\end{eqnarray}
where, $(x,y,z)_{{_\circ}_{_{\alpha, \beta, \gamma}}}
=(x\circ_{\alpha, \beta} y)\circ_{\alpha\beta, \gamma}z
-x\circ_{\alpha, \beta\gamma}(y\circ_{\beta, \gamma} z)$ and 
$[\cdot,\cdot]_{_{\alpha, \beta}}$ 
is given by Eq.~\eqref{eq:commutatorAntiflexible}.
\end{thm}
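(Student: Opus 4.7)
The plan is to expand the associator $(x,y,z)_{\circ_{\alpha,\beta,\gamma}}$ directly from the definition of $\circ$ and group the resulting eight terms so that six of them reorganize into six $\ast$-associators, which we can then collapse using the relative anti-flexible identity \eqref{eq:associatorid} together with the commutativity of $\Omega_c$. The remaining two terms will assemble into the commutator $[y,[x,z]_{\alpha,\gamma}]_{\beta,\alpha\gamma}$ on the right-hand side.

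More precisely, first I would write
\begin{eqnarray*}
(x \circ_{\alpha,\beta} y)\circ_{\alpha\beta,\gamma} z
&=& (x\ast_{\alpha,\beta}y)\ast_{\alpha\beta,\gamma}z
 + (y\ast_{\beta,\alpha}x)\ast_{\alpha\beta,\gamma}z\\
&+& z\ast_{\gamma,\alpha\beta}(x\ast_{\alpha,\beta}y)
 + z\ast_{\gamma,\alpha\beta}(y\ast_{\beta,\alpha}x),\\
x\circ_{\alpha,\beta\gamma}(y\circ_{\beta,\gamma}z)
&=& x\ast_{\alpha,\beta\gamma}(y\ast_{\beta,\gamma}z)
 + x\ast_{\alpha,\beta\gamma}(z\ast_{\gamma,\beta}y)\\
&+& (y\ast_{\beta,\gamma}z)\ast_{\beta\gamma,\alpha}x
 + (z\ast_{\gamma,\beta}y)\ast_{\beta\gamma,\alpha}x,
\end{eqnarray*}
and then pair the eight resulting summands as follows: Pair A $= (x,y,z)_{\alpha,\beta,\gamma}$, Pair B $= -(z,y,x)_{\gamma,\beta,\alpha}$, together with two mixed pairs Pair C $= (y\ast_{\beta,\alpha}x)\ast_{\alpha\beta,\gamma}z - (y\ast_{\beta,\gamma}z)\ast_{\beta\gamma,\alpha}x$ and Pair D $= z\ast_{\gamma,\alpha\beta}(x\ast_{\alpha,\beta}y) - x\ast_{\alpha,\beta\gamma}(z\ast_{\gamma,\beta}y)$.

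By \eqref{eq:associatorid}, Pair A $+$ Pair B vanishes immediately. For the mixed pairs I would insert and subtract the terms $y\ast_{\beta,\alpha\gamma}(x\ast_{\alpha,\gamma}z)$ and $y\ast_{\beta,\alpha\gamma}(z\ast_{\gamma,\alpha}x)$ inside Pair C to produce $(y,x,z)_{\beta,\alpha,\gamma} - (y,z,x)_{\beta,\gamma,\alpha} + y\ast_{\beta,\alpha\gamma}[x,z]_{\alpha,\gamma}$, and analogously insert $(x\ast_{\alpha,\gamma}z)\ast_{\alpha\gamma,\beta}y$ and $(z\ast_{\gamma,\alpha}x)\ast_{\alpha\gamma,\beta}y$ in Pair D to reach $-[x,z]_{\alpha,\gamma}\ast_{\alpha\gamma,\beta}y - (z,x,y)_{\gamma,\alpha,\beta} + (x,z,y)_{\alpha,\gamma,\beta}$. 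Commutativity of $\Omega_c$ is used here to rewrite subscripts such as $\beta\gamma$ as $\gamma\beta$ and $\gamma\alpha$ as $\alpha\gamma$, so that the associators formed are the honest $\ast$-associators.

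Finally, applying \eqref{eq:associatorid} twice more yields $(y,x,z)_{\beta,\alpha,\gamma} = (z,x,y)_{\gamma,\alpha,\beta}$ and $(y,z,x)_{\beta,\gamma,\alpha} = (x,z,y)_{\alpha,\gamma,\beta}$, so all four residual $\ast$-associators in Pair C $+$ Pair D cancel in pairs. What remains is precisely
\begin{eqnarray*}
y\ast_{\beta,\alpha\gamma}[x,z]_{\alpha,\gamma} - [x,z]_{\alpha,\gamma}\ast_{\alpha\gamma,\beta}y = [y,[x,z]_{\alpha,\gamma}]_{\beta,\alpha\gamma},
\end{eqnarray*}
which is the desired identity. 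The main obstacle is purely bookkeeping: it is essential to track the subscript pairs consistently when invoking \eqref{eq:associatorid} and to use the commutativity of $\Omega_c$ at each step where subscripts like $\beta\gamma$ and $\gamma\beta$ need to be identified; once these indices are reconciled, the cancellations go through mechanically.
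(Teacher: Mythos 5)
Your proposal is correct and follows essentially the same route as the paper: expand the $\circ$-associator into eight $\ast$-terms, cancel $(x,y,z)_{\alpha,\beta,\gamma}-(z,y,x)_{\gamma,\beta,\alpha}$ via Eq.~\eqref{eq:associatorid}, and convert the remaining four terms into $[y,[x,z]_{\alpha,\gamma}]_{\beta,\alpha\gamma}$ by two further applications of the same identity (the paper applies the identity directly to the pairs $(T_1,T_2)$ and $(T_3,T_4)$ rather than via your add-and-subtract bookkeeping, but the two instances of \eqref{eq:associatorid} invoked are identical). No gap.
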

\begin{proof}
Let $x, y,z\in A$, and for all $\alpha, \beta, \gamma\in \Omega_c$, we have
\begin{eqnarray*}
(x,y,z)_{{_\circ}_{_{\alpha, \beta, \gamma}}}&=&
(x\circ_{\alpha, \beta}y)\circ_{\alpha \beta, \gamma}z-
x\circ_{\alpha, \beta\gamma}(y\circ_{\beta, \gamma} z)\cr
&=&(x\ast_{\alpha, \beta}y+
y\ast_{\beta, \alpha}x)\ast_{\alpha \beta, \gamma}z+
z\ast_{\gamma, \alpha \beta}(x\ast_{\alpha, \beta}y+
y\ast_{\beta, \alpha}x)
\cr&-&
x\ast_{\alpha, \beta\gamma}(y\ast_{\beta, \gamma}z
+z\ast_{\gamma, \beta}y)
-(y\ast_{\beta, \gamma}z
+z\ast_{\gamma, \beta}y)\ast_{\beta\gamma, \alpha}x\cr 
&=&
\{(x\ast_{\alpha, \beta}y)\ast_{\alpha\beta, \gamma}z-
x\ast_{\alpha, \beta\gamma}(y\ast_{\beta, \gamma}z)\}-
\{(z\ast_{\gamma, \beta}y)\ast_{\beta\gamma, \alpha}x-
z\ast_{\gamma, \alpha\beta}(y\ast_{\beta, \alpha}x) \}\cr
&+&
(y\ast_{\beta, \alpha}x)\ast_{\alpha\beta, \gamma}z+
z\ast_{\gamma, \alpha\beta}(x\ast_{\alpha, \beta}y)
-x\ast_{\alpha, \beta\gamma}(z\ast_{\gamma, \beta}y)
-(y\ast_{\beta, \gamma}y)\ast_{\beta\gamma, \alpha}x
\cr&=&
(y\ast_{\beta, \alpha}x)\ast_{\alpha\beta, \gamma}z+
z\ast_{\gamma, \alpha\beta}(x\ast_{\alpha, \beta}y)
-x\ast_{\alpha, \beta\gamma}(z\ast_{\gamma, \beta}y)
-(y\ast_{\beta, \gamma}y)\ast_{\beta\gamma, \alpha}x
\cr&=&
y\ast_{\beta, \alpha\gamma}(x\ast_{\alpha, \gamma}z)
+(z\ast_{\gamma, \alpha}x)\ast_{\gamma\alpha, \beta}y
-y\ast_{\beta, \gamma\alpha}(z\ast_{\gamma, \alpha}x)
-(x\ast_{\alpha, \gamma}z)\ast_{\alpha\gamma, \beta}y\cr 
&=&
[y, [x,z]_{_{\alpha, \gamma}}]_{_{\beta, \alpha\gamma}}.
\end{eqnarray*}
Note that the three last equals sign upwards is due to 
Eq.~\eqref{eq:identityantiflexible}.
\end{proof}
\begin{pro}
Let $(A, \ast_{\omega_1, \omega_2}, \Omega_c)$ be and $\Omega_c$-relative 
anti-flexible algebra. Considering the
algebra $(A, \circ_{\omega_1, \omega_2}, \Omega_c)$ given above, 
we have for any $x,y,z\in A$ and for any 
$\alpha, \beta, \gamma \in \Omega_c$, we have
\begin{eqnarray}\label{eq:identitycommutatorntiflexible}
(x,y,z)_{{_\circ}_{_{\alpha, \beta, \gamma}}}+
(z,x,y)_{{_\circ}_{_{\gamma, \alpha, \beta}}}+
(y,z,x)_{{_\circ}_{_{\beta, \gamma, \alpha}}}=0.
\end{eqnarray}
\end{pro}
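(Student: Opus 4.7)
The plan is to invoke Theorem~\ref{thm_anticommutatorantiflexible} three times, once for each associator on the left-hand side of \eqref{eq:identitycommutatorntiflexible}, and then to recognize the resulting expression as the $\Omega_c$-relative Jacobi sum, which vanishes by \eqref{eq:LieFamilyJacobi}. Concretely, cycling the substitutions $(x,y,z,\alpha,\beta,\gamma)\mapsto(z,x,y,\gamma,\alpha,\beta)$ and $(x,y,z,\alpha,\beta,\gamma)\mapsto(y,z,x,\beta,\gamma,\alpha)$ in the identity
$$(x,y,z)_{\circ_{\alpha,\beta,\gamma}}=[y,[x,z]_{\alpha,\gamma}]_{\beta,\alpha\gamma}$$
supplied by Theorem~\ref{thm_anticommutatorantiflexible} rewrites each of the three associators in the sum as a single nested double bracket.

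Next, one applies the $\Omega_c$-relative skew-symmetry \eqref{eq:LieFamilyskew} to both the outer and the inner bracket of each of the three nested expressions; bilinearity in the first slot together with the commutativity of $\Omega_c$ (so that $\gamma\beta=\beta\gamma$, $\beta\alpha=\alpha\beta$, $\alpha\gamma=\gamma\alpha$) may be used to bring every term into the canonical form $[[\,\cdot\,,\,\cdot\,]_{\omega_1,\omega_2},\,\cdot\,]_{\omega_1\omega_2,\omega_3}$ appearing in the Jacobi identity \eqref{eq:LieFamilyJacobi}. After these rearrangements the three terms collapse to
$$[[x,y]_{\alpha,\beta},z]_{\alpha\beta,\gamma}+[[y,z]_{\beta,\gamma},x]_{\beta\gamma,\alpha}+[[z,x]_{\gamma,\alpha},y]_{\gamma\alpha,\beta},$$
which is zero by the $\Omega_c$-relative Jacobi identity for the Lie bracket associated to $(A,\ast_{\omega_1,\omega_2},\Omega_c)$ (Theorem~\ref{theorelativeantiflexible-Lie}, whose proof only uses the associator symmetry \eqref{eq:associatorid} and so is valid for an arbitrary $\Omega_c$-relative anti-flexible algebra).

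The only genuine obstacle is bookkeeping: one must track six sign flips from skew-symmetry and repeatedly reorder index pairs by the commutativity of $\Omega_c$ so that the three terms land in a common normal form. Nothing about the anti-flexible structure enters beyond the symmetry \eqref{eq:associatorid} that was already encoded into Theorem~\ref{thm_anticommutatorantiflexible}, so once the indices are aligned the identity is immediate.
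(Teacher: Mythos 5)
Your proof is correct and follows essentially the same route as the paper, which simply cites Theorem~\ref{thm_anticommutatorantiflexible} together with the relative Jacobi identity of Theorem~\ref{theorelativeantiflexible-Lie}; you have merely written out the cyclic substitutions and the skew-symmetry/commutativity bookkeeping that the paper leaves implicit. Your remark that both cited theorems depend only on the associator symmetry \eqref{eq:associatorid}, and hence apply to an arbitrary $\Omega_c$-relative anti-flexible algebra rather than only to one arising from a pre-anti-flexible family algebra, is a worthwhile clarification of the paper's citation.
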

\begin{proof}
According to Theorem~\ref{theorelativeantiflexible-Lie} and 
Theorem~\ref{thm_anticommutatorantiflexible}, 
Eq.~\eqref{eq:identitycommutatorntiflexible} is satisfied.
\end{proof}

\section{Relative pre-anti-flexible algebras 
 and associated relative algebras}\label{section3}
 Relative pre-anti-flexible algebras structures are 
 introduced and associated relative algebras 
 structures are built. Moreover, 
 relative pre-anti-flexible algebras structures are
 view as a generalization of 
 pre-anti-flexible family algebraic structures and 
 associated consequences are deduct.
\begin{defi}
An $\Omega_c$-relative pre-anti-flexible algebra
consists of a vector space $A$ equipped with
operations 
$\prec_{\alpha, \beta}; 
\succ_{\alpha, \beta}: A\times A\rightarrow A$ for 
each pair $(\alpha, \beta)\in \Omega_c^2$ and satisfying
for any $x,y,z\in A$ and for any 
$\alpha, \beta, \gamma\in \Omega_c$
\begin{subequations}
\begin{eqnarray}\label{eq:relativepreanti1}
(x\succ_{\alpha, \beta}y)\prec_{\alpha\beta, \gamma}z-
x\succ_{\alpha, \beta\gamma}(y\prec_{\beta, \gamma}z)=
(z\succ_{\gamma, \beta}y)\prec_{\gamma\beta, \alpha}x-
z\succ_{\gamma, \beta\alpha}(y\prec_{\beta, \alpha}x), 
\end{eqnarray}
\begin{eqnarray}\label{eq:relativepreanti2}
(x\prec_{\alpha, \beta}y+x\succ_{\alpha, \beta} y)\succ_{\alpha\beta, \gamma}z-
x\succ_{\alpha, \beta\gamma}(y\succ_{\beta, \gamma} z)=\cr 
(z\prec_{\gamma, \beta}y)\prec_{\gamma\beta, \alpha}x-
z\prec_{\gamma, \beta\alpha}(y\prec_{\beta,\alpha}x+
y\succ_{\beta, \alpha}x).
\end{eqnarray}
\end{subequations}
\end{defi}
In the following of this paper, the quadruple 
$(A,\prec_{\omega_1, \omega_2}, \succ_{\omega_1, \omega_2}, \Omega_c)$
 designates an $\Omega_c$-relative 
pre-anti-flexible algebra.
\begin{rmk}
Note that if the LSH and RHS of Eq.~\eqref{eq:relativepreanti1} and 
Eq.~\eqref{eq:relativepreanti2} are zero, then 
$(A,\prec_{\omega_1, \omega_2}, \succ_{\omega_1, \omega_2}, \Omega_c)$
still $\Omega_c$-relative pre-anti-flexible algebra and also
$\Omega_c$-relative dendriform  algebra (\cite{Aguiar}).
Hence, $\Omega_c$-relative pre-anti-flexible algebras are 
$\Omega_c$-relative dendriform algebras.
\end{rmk}
\begin{pro}\label{prop:underreativeantiflexible}
Let $(A,\prec_{\omega_1, \omega_2}, \succ_{\omega_1, \omega_2}, \Omega_c)$
be an $\Omega_c$-relative pre-anti-flexible algebra. The following
linear product given by, for any $x,y\in A$ and for any 
$\alpha, \beta \in \Omega_c$,
\begin{eqnarray}\label{eq:underlyingrelativeantiflexible}
x\circledast_{\alpha, \beta} y=
x\prec_{\alpha, \beta} y+x\succ_{\alpha, \beta} y,
\end{eqnarray}
endows to $A$ an $\Omega_c$-relative anti-flexible algebra.
\end{pro}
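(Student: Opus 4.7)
The plan is to mirror the computation used in Theorem~\ref{thm:underlingrelaticeantiflexible}, transplanted to the bi-indexed setting. I would fix $x,y,z \in A$ and $\alpha,\beta,\gamma \in \Omega_c$ and expand
\[
(x,y,z)_{\circledast_{\alpha,\beta,\gamma}} = (x \circledast_{\alpha,\beta} y) \circledast_{\alpha\beta,\gamma} z - x \circledast_{\alpha,\beta\gamma} (y \circledast_{\beta,\gamma} z)
\]
via the definition $\circledast = \prec + \succ$. This produces eight terms, which I would regroup into three blocks: a block of the form $(x\succ y)\prec z - x\succ(y\prec z)$ with the indices $(\alpha,\beta\gamma)$ and $(\alpha\beta,\gamma)$; a block of the form $(x\succ y + x\prec y)\succ z - x\succ(y\succ z)$; and a block of the form $(x\prec y)\prec z - x\prec(y\succ z + y\prec z)$.

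Next, I would apply the two defining identities of the $\Omega_c$-relative pre-anti-flexible algebra: Eq.~\eqref{eq:relativepreanti1} converts the first block into $(z\succ_{\gamma,\beta} y)\prec_{\gamma\beta,\alpha} x - z\succ_{\gamma,\beta\alpha}(y\prec_{\beta,\alpha} x)$, while Eq.~\eqref{eq:relativepreanti2} converts the second block into a $\prec\prec - \prec(\succ+\prec)$ expression in $(z,y,x)$, and, read with the roles of the two sides swapped, converts the third block into a $(\succ+\prec)\succ - \succ\succ$ expression in $(z,y,x)$. The choice of which side of each identity to use for each block is the key bookkeeping, and it only works because $\Omega_c$ is commutative, so that indices such as $\alpha\beta\gamma$ match $\gamma\beta\alpha$.

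Finally, I would reassemble the three transformed blocks and recognize the total as the eight-term expansion of
\[
(z \circledast_{\gamma,\beta} y) \circledast_{\gamma\beta,\alpha} x - z \circledast_{\gamma,\beta\alpha}(y \circledast_{\beta,\alpha} x) = (z,y,x)_{\circledast_{\gamma,\beta,\alpha}},
\]
which is exactly the relative anti-flexible identity \eqref{eq:associatorid} for $\circledast$. The main obstacle, as in Theorem~\ref{thm:underlingrelaticeantiflexible}, will not be the conceptual content but the careful tracking of the paired indices: unlike the family case where only one semigroup element sits under each operation, here every $\prec$ and $\succ$ carries a pair, and one must verify that each application of \eqref{eq:relativepreanti1}--\eqref{eq:relativepreanti2} lines up the pairs $(\alpha,\beta\gamma),(\alpha\beta,\gamma),(\gamma,\beta\alpha),(\gamma\beta,\alpha)$ correctly. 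The commutativity of $\Omega_c$ is what makes this alignment go through.
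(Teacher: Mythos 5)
Your proposal is correct and follows essentially the same route as the paper's proof: expand the $\circledast$-associator into eight terms, regroup into the three blocks governed by Eqs.~\eqref{eq:relativepreanti1} and \eqref{eq:relativepreanti2} (the latter used once in each direction), and reassemble into $(z,y,x)_{\circledast_{\gamma,\beta,\alpha}}$. The index bookkeeping you flag, including the reliance on commutativity of $\Omega_c$, is exactly what the paper's computation carries out.
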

\begin{proof}
Let $x,y,z\in A$ and $\alpha, \beta, \gamma \in \Omega_c$. We have
\begin{eqnarray*}
(x,y,z)_{\circledast_{\alpha, \beta, \gamma}}&=&
(x\succ_{\alpha, \beta}y+x\prec_{\alpha, \beta}y)\succ_{\alpha\beta, \gamma}z+
(x\succ_{\alpha, \beta}y+x\prec_{\alpha, \beta}y)\prec_{\alpha\beta, \gamma}z
\cr  
&-&
x\succ_{\alpha, \beta\gamma}(y\succ_{\beta, \gamma}z+y\prec_{\beta, \gamma}z)-
x\prec_{\alpha, \beta\gamma}(y\succ_{\beta, \gamma}z+y\prec_{\beta, \gamma}z)
\cr 
&=&
\{
(x\succ_{\alpha, \beta}y+x\prec_{\alpha, \beta}y)\succ_{\alpha\beta, \gamma}z
-x\succ_{\alpha, \beta\gamma}(y\succ_{\beta, \gamma}z)
\}
\cr&-&
\{
x\prec_{\alpha, \beta\gamma} (y\succ_{\beta,\gamma}z+y\prec_{\beta, \gamma}z)-
(x\prec_{\alpha, \beta}y)\prec_{\alpha\beta, \gamma}z
\}
\cr&+&
\{
(x\succ_{\alpha, \beta}y)\prec_{\alpha\beta, \gamma}z-
x\succ_{\alpha,\beta\gamma}(y\prec_{\beta, \gamma}z)
\}\cr
&=&
\{
(z\prec_{\gamma, \beta}y)\prec_{\gamma\beta, \alpha}x
-z\prec_{\gamma, \beta\alpha}(y\prec_{\beta, \alpha}x+y\succ_{\beta, \alpha}x)
\}\cr
&+&
\{
(z\prec_{\gamma, \beta}y+z\succ_{\gamma, \beta}y)\succ_{\gamma\beta, \alpha}x-
z\succ_{\gamma, \beta\gamma}(y\succ_{\beta, \alpha}x)
\}\cr
&+&
\{
(z\succ_{\gamma, \beta}y)\prec_{\gamma\beta, \alpha}x-
z\succ_{\gamma, \beta\alpha}(y\prec_{\beta, \alpha}x)
\}\cr
&=&
(z\circledast_{\gamma, \beta}y)\circledast_{\gamma\beta, \alpha}x-
z\circledast_{\gamma, \beta\alpha}(y\circledast_{\beta, \alpha}x)=
(z,y,x)_{\circledast_{\gamma, \beta, \alpha}}
\end{eqnarray*}
Note that the third equal sign upwards above is due to 
Eq.~\eqref{eq:relativepreanti1} and 
Eq.~\eqref{eq:relativepreanti2}.
Therefore, $(A, \circledast_{_{\omega_1, \omega_2}}, \Omega_c)$ 
is an $\Omega_c$-relative anti-flexible algebra.
\end{proof}
\begin{defi}
An $\Omega_c$-relative pre-Lie  algebra is a vector space $A$ equipped 
the family of operations
$\ast_{_{\alpha,\beta}}:A\otimes A\rightarrow A$
 for each couple $(\alpha, \beta) \in \Omega_c^2$ 
 such that for any $x,y,z\in A$,
 and for any $\alpha, \beta, \gamma\in \Omega_c$, 
 \begin{eqnarray}\label{eq:associatorpreLieid}
 (x,y,z)_{_{\alpha,\beta, \gamma}}=(y,x,z)_{_{\beta, \alpha, \gamma}},
 \end{eqnarray}
 or equivalently
 \begin{eqnarray}\label{eq:preLieid}
 (x\ast_{_{\alpha, \beta}} y)\ast_{_{\alpha\beta, \gamma}}z-
 x\ast_{_{\alpha, \beta\gamma}}(y\ast_{_{\beta, \gamma}}z)-
 (y\ast_{_{\beta, \alpha}} x)\ast_{_{\beta\alpha, \gamma}}z+
 y\ast_{_{\beta, \alpha\gamma}}(x\ast_{_{\alpha, \gamma}}z)=0.
 \end{eqnarray}
\end{defi}
\begin{thm}
Let $(A,\prec_{\omega_1, \omega_2}, \succ_{\omega_1, \omega_2}, \Omega_c)$
be an $\Omega_c$-relative pre-anti-flexible algebra,
defining for all $\alpha, \beta\in \Omega$ and 
for any $x,y\in A$, 
\begin{eqnarray}\label{eq:underlyingpreLie1}
x \blacktriangleright_{_{\alpha, \beta}}y= 
x\succ_{\alpha, \beta} y- y\prec_{\beta, \alpha} x,
\end{eqnarray}
then 
$(A, \blacktriangleright_{_{\alpha, \beta}}, \Omega_c)$ is an
$\Omega_c$-relative pre-Lie  algebra.  
\end{thm}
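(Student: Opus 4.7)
The plan is to verify directly the defining identity of an $\Omega_c$-relative pre-Lie algebra, namely
\[
(x,y,z)_{\blacktriangleright_{\alpha,\beta,\gamma}} = (y,x,z)_{\blacktriangleright_{\beta,\alpha,\gamma}},
\]
by unfolding $x \blacktriangleright_{\alpha,\beta} y = x \succ_{\alpha,\beta} y - y \prec_{\beta,\alpha} x$ inside each associator. First I would expand $(x,y,z)_{\blacktriangleright_{\alpha,\beta,\gamma}}$ to obtain eight summands built from iterated $\succ$ and $\prec$ operations with indices drawn from $\{\alpha, \beta, \gamma, \alpha\beta, \beta\gamma, \alpha\beta\gamma\}$, and similarly expand $(y,x,z)_{\blacktriangleright_{\beta,\alpha,\gamma}}$ to produce another eight. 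The commutativity of $\Omega_c$, giving $\alpha\beta = \beta\alpha$, $\beta\gamma = \gamma\beta$ and $\alpha\gamma = \gamma\alpha$, is what ensures the paired indices on the two sides can be identified.

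The core of the argument is to regroup the sixteen terms of the difference into blocks matching the two axioms \eqref{eq:relativepreanti1} and \eqref{eq:relativepreanti2}. Following the template of the earlier proof that $x\triangleright_\omega y = x\succ_\omega y - y\prec_\omega x$ endows a pre-anti-flexible family algebra with a left pre-Lie family structure, I would first isolate the combination $(x\succ_{\alpha,\beta}y + x\prec_{\alpha,\beta}y)\succ_{\alpha\beta,\gamma}z$ (and its counterpart with $x$ and $y$ interchanged) and apply \eqref{eq:relativepreanti2} to convert each into terms of the shape $x\succ(y\succ z)$, $(z\prec y)\prec x$ and $z\prec(y\prec x + y\succ x)$. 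These new terms cancel exactly against the remaining $z\prec_{\gamma,\alpha\beta}(\ldots)$ and $x\succ_{\alpha,\beta\gamma}(y\succ z)$ summands from the original expansion once commutativity of $\Omega_c$ is used to equate index pairs such as $\gamma\beta\alpha$ and $\gamma\alpha\beta$.

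What remains after this first cancellation is the combination
\[
x\succ_{\alpha,\beta\gamma}(z\prec_{\gamma,\beta}y) + (y\succ_{\beta,\gamma}z)\prec_{\beta\gamma,\alpha}x - y\succ_{\beta,\alpha\gamma}(z\prec_{\gamma,\alpha}x) - (x\succ_{\alpha,\gamma}z)\prec_{\alpha\gamma,\beta}y,
\]
which is precisely \eqref{eq:relativepreanti1} applied to the triple $(x,z,y)$ with indices $(\alpha,\gamma,\beta)$, and therefore vanishes. The main obstacle I expect is not conceptual but combinatorial: each of the sixteen terms carries a triple of semigroup indices that must be tracked carefully, and a single misplaced sign or swapped subscript would break the cancellation. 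Organizing the computation in two stages, first applying \eqref{eq:relativepreanti2} and then \eqref{eq:relativepreanti1}, while using the commutativity of $\Omega_c$ liberally to equate indices, should complete the verification cleanly.
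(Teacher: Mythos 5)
Your proposal is correct and follows essentially the same route as the paper: expand the associators, feed the $(x\succ_{\alpha,\beta}y+x\prec_{\alpha,\beta}y)\succ_{\alpha\beta,\gamma}z$ blocks into Eq.~\eqref{eq:relativepreanti2}, and recognize the four surviving terms as an instance of Eq.~\eqref{eq:relativepreanti1} applied to $(x,z,y)$ with indices $(\alpha,\gamma,\beta)$. The only cosmetic difference is that you cancel the sixteen terms of the difference outright, whereas the paper rewrites $(x,y,z)_{\blacktriangleright_{\alpha,\beta,\gamma}}$ into an expression manifestly symmetric under the exchange $(x,\alpha)\leftrightarrow(y,\beta)$.
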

\begin{proof}
Let $x,y,z\in A$ and let $\alpha, \beta, \gamma\in \Omega_c$. 
We have 
\begin{eqnarray*}
(x,y,z)_{\blacktriangleright_{\alpha, \beta, \gamma}}&=&
(x\blacktriangleright_{_{\alpha, \beta}} y)
\blacktriangleright_{_{\alpha\beta, \gamma}}z
-x\blacktriangleright_{_{\alpha, \beta\gamma}}(y\blacktriangleright_{_{\beta, \gamma}}z)\cr
&=&
(x\succ_{\alpha, \beta}y-y\prec_{\beta, \alpha}x)\succ_{\alpha\beta, \gamma}z-
z\prec_{\gamma, \alpha\beta}(x\succ_{\alpha, \beta}y-y\prec_{\beta, \alpha}x)
\cr&-&
x\succ_{\alpha, \beta\gamma}(y\succ_{\beta, \gamma}z-z\prec_{\gamma, \beta}y)+
(y\succ_{\beta, \gamma}z-z\prec_{\gamma, \beta}y)\prec_{\beta\gamma, \alpha}x
\cr
&=&
\{
(x\succ_{\alpha, \beta}y)\succ_{\alpha\beta, \gamma}z-
x\succ_{\alpha, \beta\gamma}(y\succ_{\beta, \gamma}z)
\}
\cr
&-&
\{
(z\prec_{\gamma, \beta}y)\prec_{\beta\gamma, \alpha}x-
z\prec_{\gamma, \beta\alpha}(y\prec_{\beta, \alpha}x)
\}
\cr
&+&
\{
(y\succ_{\beta, \gamma}z)\prec_{\beta\gamma, \alpha}x+
x\succ_{\alpha, \beta\gamma}(z\prec_{\gamma, \beta}y)
\}
\cr
&-&
\{
(y\prec_{\beta, \alpha}x)\succ_{\alpha\beta, \gamma}z+
z\prec_{\gamma, \alpha\beta}(x\succ_{\alpha, \beta}y)
\}
\cr
&=&
\{
(x\succ_{\alpha, \gamma}z)\prec_{\alpha\gamma, \beta}y+
y\succ_{\beta, \alpha\gamma}(z\prec_{\gamma, \alpha}x)
\}
\cr
&-&
\{
(x\succ_{\alpha, \beta}y+y\prec_{\beta, \alpha}x)\succ_{\alpha\beta, \gamma}z
\}
\cr
&-&
\{
z\prec_{\gamma, \alpha\gamma}(x\succ_{\alpha, \beta}y+y\succ_{\beta, \alpha}x)
\}
=
(y,x,z)_{\blacktriangleright_{\beta, \alpha,  \gamma}}
\end{eqnarray*}
Note that  the second equal sign upwards in the above successive relations 
is due to Eq.~\eqref{eq:relativepreanti1} and Eq.~\eqref{eq:relativepreanti2}.
Therefore, 
$(A, \blacktriangleright_{_{\alpha, \beta}}, \Omega_c)$ is an
$\Omega_c$-relative pre-Lie  algebra. 
\end{proof}
\begin{thm}
Let $A$ be a $\mathbf{k}$ vector space,
$\Omega_c$ is a commutative associative semi-group.
Consider the linear products 
$\prec, \succ: A\otimes\mathbf{k}\Omega_c\times A\otimes\mathbf{k}\Omega_c
\rightarrow A\otimes\mathbf{k}\Omega_c$ defined on 
$A\otimes\mathbf{k}\Omega_c$. The triple 
$(A\otimes\mathbf{k}\Omega_c, \prec, \succ)$ 
is a pre-anti-flexible algebra if and only if 
$(A, \prec_{\omega_1, \omega_2}, \succ_{\omega_1, \omega_2})$ is a relative
pre-anti-flexible  algebra, where for any 
$x,y,\in A$ and for any $\alpha, \beta\in \Omega_c$, 
\begin{subequations}
\begin{eqnarray}\label{eq:product-1}
(x\otimes \alpha)\prec (y\otimes \beta):=
(x\prec_{\alpha, \beta} y)\otimes \alpha\beta, 
\end{eqnarray}
\begin{eqnarray}\label{eq:product-2}
(x\otimes \alpha)\succ (y\otimes \beta):=
(x\succ_{\alpha, \beta}y)\otimes \alpha\beta.
\end{eqnarray}
\end{subequations}
\end{thm}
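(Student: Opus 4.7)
The plan is to mimic the strategy of the analogous theorem proved earlier in Section~\ref{section2} for pre-anti-flexible family algebras on $A\otimes\mathbf{k}\Omega_c$; only the indexing of the underlying products changes from a single $\omega$ to a pair $(\omega_1,\omega_2)$, while the combinatorial pattern of the computation is identical. Both directions of the equivalence will emerge from one and the same unpacking calculation, so I will not separate them until the very end.

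First I would evaluate the two defining pre-anti-flexible axioms \eqref{eq:pre-anti-flexible-rel1} and \eqref{eq:pre-anti-flexible-rel2} on pure tensors $X = x\otimes\alpha$, $Y = y\otimes\beta$, $Z = z\otimes\gamma$. Applying \eqref{eq:product-1} and \eqref{eq:product-2} step by step, each of the four terms in the LHS of \eqref{eq:pre-anti-flexible-rel1} becomes an element of $A$ tensored with the semigroup monomial $\alpha\beta\gamma$, namely
\begin{equation*}
\bigl((x\succ_{\alpha,\beta}y)\prec_{\alpha\beta,\gamma}z
- x\succ_{\alpha,\beta\gamma}(y\prec_{\beta,\gamma}z)\bigr)\otimes\alpha\beta\gamma,
\end{equation*}
and the analogous computation of the RHS produces
\begin{equation*}
\bigl((z\succ_{\gamma,\beta}y)\prec_{\gamma\beta,\alpha}x
- z\succ_{\gamma,\beta\alpha}(y\prec_{\beta,\alpha}x)\bigr)\otimes\gamma\beta\alpha.
\end{equation*}
Commutativity of $\Omega_c$ identifies $\alpha\beta\gamma$ with $\gamma\beta\alpha$, so after factoring out this common semigroup component the equality of the two tensor expressions becomes exactly the relative identity \eqref{eq:relativepreanti1}.

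Second, I would run the same bookkeeping for the second axiom \eqref{eq:pre-anti-flexible-rel2}. Each of the six summands produces a pure tensor whose $\mathbf{k}\Omega_c$-component is again $\alpha\beta\gamma$ (after applying commutativity), and stripping this off recovers \eqref{eq:relativepreanti2}. Thus the axiom \eqref{eq:pre-anti-flexible-rel1} (resp.\ \eqref{eq:pre-anti-flexible-rel2}) evaluated on the triple $(x\otimes\alpha,y\otimes\beta,z\otimes\gamma)$ holds in $A\otimes\mathbf{k}\Omega_c$ if and only if \eqref{eq:relativepreanti1} (resp.\ \eqref{eq:relativepreanti2}) holds for $(x,y,z)$ with indices $(\alpha,\beta,\gamma)$ in $A$.

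Finally, to close both implications simultaneously, I would invoke bilinearity and linear independence of $\Omega_c$ inside $\mathbf{k}\Omega_c$: the pre-anti-flexible axioms on arbitrary elements of $A\otimes\mathbf{k}\Omega_c$ are equivalent to their validity on pure tensors, and the identity of two tensors $u\otimes\alpha\beta\gamma$ and $v\otimes\alpha\beta\gamma$ in $A\otimes\mathbf{k}\Omega_c$ is equivalent to $u = v$ in $A$. This yields the two-sided implication.

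The only genuinely delicate point is the systematic use of the commutativity of $\Omega_c$ to make $\alpha\beta\gamma = \gamma\beta\alpha$: without this, the two sides of each tensored identity would sit in different graded pieces of $A\otimes\mathbf{k}\Omega_c$ and no equivalence could be extracted. Apart from this, the argument is purely a careful tracking of the double indices on $\prec_{\ast,\ast}$ and $\succ_{\ast,\ast}$, so the potential obstacle is bookkeeping rather than conceptual.
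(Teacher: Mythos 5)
Your proposal is correct and matches the paper's own argument: the paper likewise evaluates the two pre-anti-flexible identities on pure tensors $x\otimes\alpha$, $y\otimes\beta$, $z\otimes\gamma$, obtains each side as (relative identity)$\,\otimes\,\alpha\beta\gamma$ using the commutativity of $\Omega_c$, and reads off the equivalence. Your added remark on bilinearity and the linear independence of $\Omega_c$ in $\mathbf{k}\Omega_c$ is a slightly more explicit justification of the final step than the paper gives, but it is the same proof.
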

\begin{proof}
Let $x,y,z\in A$ and $\alpha, \beta, \gamma \in \Omega_c$.
We have
\begin{eqnarray*}
((x\otimes \alpha)\succ (y\otimes \beta))\prec (z\otimes \gamma)-
(x\otimes\alpha)\succ ((y\otimes \beta)\prec (z\otimes \gamma))\cr=
(
(x\succ_{\alpha, \beta}y)\prec_{\alpha\beta, \gamma}z
-x\succ_{\alpha, \beta\gamma}(y\prec_{\beta, \gamma}z)
)\otimes\alpha\beta\gamma,
\end{eqnarray*}
\begin{eqnarray*}
((x\otimes \alpha)\succ(y\otimes \beta)+
(x\otimes \alpha)\prec(y\otimes \beta))\succ (z\otimes \gamma)-
(x\otimes\alpha)\succ ((y\otimes \beta)\succ(z\otimes\gamma))\cr
=
(
(x\succ_{\alpha, \beta}y
+x\prec_{\alpha, \beta}y)\succ_{\alpha\beta, \gamma}z
-x\succ_{\alpha, \beta\gamma}(y\succ_{\beta, \gamma}z)
)\otimes\alpha\beta\gamma.
\end{eqnarray*}
Using the commutativity of $\Omega_c$, we have 
\begin{eqnarray*}
((z\otimes \gamma)\succ (y\otimes \beta))\prec (x\otimes \alpha)-
(z\otimes\gamma)\succ ((y\otimes \beta)\prec (x\otimes \alpha))\cr=
 (
 (z\succ_{\gamma, \beta}y)\prec_{\gamma\beta, \alpha}x
 -z\succ_{\gamma, \beta\alpha}(y\prec_{\beta, \alpha}x)
 )\otimes\alpha\beta\gamma,
\end{eqnarray*}
\begin{eqnarray*}
((z\otimes  \gamma)\prec(y\otimes\beta))\prec (x\otimes\alpha)-
(z\otimes \gamma)\prec ((y\otimes \beta)\succ(x\otimes\alpha)
+(y\otimes\beta)\prec(x\otimes\alpha) )\cr=
 (
 (z\prec_{\gamma, \beta}y)\prec_{\gamma\beta, \alpha}x
 -z\prec_{\gamma, \beta\alpha}
 (y\succ_{\beta, \alpha}x+y\prec_{\beta,\alpha}x)
 )\otimes\alpha\beta\gamma.
\end{eqnarray*}
Thus if $(A\otimes\mathbf{k}\Omega_c, \prec, \succ)$ is 
a pre-anti-flexible algebra then
$(A, \prec_{\omega_1, \omega_2}, \succ_{\omega_1, \omega_2})$ 
is an $\Omega_c$-relative pre-anti-flexible  algebra, and 
conversely, if $(A, \prec_{\omega_1, \omega_2}, \succ_{\omega_1, \omega_2})$ 
is an $\Omega_c$-relative pre-anti-flexible  algebra wiht 
product given by Eq.~\eqref{eq:product-1} and 
Eq.~\eqref{eq:product-2}, then 
$(A\otimes\mathbf{k}\Omega_c, \prec, \succ)$ is 
a pre-anti-flexible algebra.
\end{proof}
\begin{defi}
An $\Omega_c$-relative right pre-Lie  algebra 
(or $\Omega_c$-relative right symmetric algebra)
is a vector space $A$ equipped 
the family of operations
"$\cdot_{_{\alpha,\beta}}:A\otimes A\rightarrow A$"
 for each couple $(\alpha, \beta) \in \Omega_c^2$ 
 such that for any $x,y,z\in A$,
 and for any $\alpha, \beta, \gamma\in \Omega_c$, 
 \begin{eqnarray}\label{eq:associatorrightLieid}
 (x,y,z)_{_{\alpha,\beta, \gamma}}=
 (x,z,y)_{_{\alpha, \gamma, \beta}},
 \end{eqnarray}
 or equivalently
 \begin{eqnarray}\label{eq:rightLieid}
 (x\cdot_{_{\alpha, \beta}} y)\cdot_{_{\alpha\beta, \gamma}}z-
 x\cdot_{_{\alpha, \beta\gamma}}(y\cdot_{_{\beta, \gamma}}z)-
 (x\cdot_{_{\alpha, \gamma}} z)\cdot_{_{\alpha\gamma, \beta}}y+
 x\cdot_{_{\alpha, \gamma\beta}}(z\cdot_{_{\gamma, \beta}}y)=0.
 \end{eqnarray}
\end{defi}
\begin{pro}
Let $(A,\prec_{\omega_1, \omega_2}, \succ_{\omega_1, \omega_2}, \Omega_c)$
be an $\Omega_c$-relative pre-anti-flexible algebra,
defining for all $\alpha, \beta\in \Omega$ and 
for any $x,y\in A$,
\begin{eqnarray}\label{eq:underlyingpreLie2}
x \blacktriangleleft_{_{\alpha, \beta}}y= 
x\prec_{\alpha, \beta} y- y\succ_{\beta, \alpha} x,
\end{eqnarray}
and 
$(A, \blacktriangleleft_{_{\alpha, \beta}}, \Omega_c)$ is
an $\Omega_c$-relative pre-Lie  algebra.
\end{pro}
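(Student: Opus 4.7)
The plan is to mirror the argument used in the preceding theorem (the one establishing that $(A,\blacktriangleright_{\alpha,\beta},\Omega_c)$ is a relative left pre-Lie algebra), but this time verifying the right pre-Lie identity \eqref{eq:associatorrightLieid}, namely $(x,y,z)_{\blacktriangleleft_{\alpha,\beta,\gamma}}=(x,z,y)_{\blacktriangleleft_{\alpha,\gamma,\beta}}$. I would fix $x,y,z\in A$ and $\alpha,\beta,\gamma\in\Omega_c$, and expand $(x,y,z)_{\blacktriangleleft_{\alpha,\beta,\gamma}}$ by substituting $x\blacktriangleleft_{\alpha,\beta}y=x\prec_{\alpha,\beta}y-y\succ_{\beta,\alpha}x$ in every occurrence. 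This produces eight terms involving $\prec$ and $\succ$ applied to the four composite factors $x\prec_{\alpha,\beta}y$, $y\succ_{\beta,\alpha}x$, $y\prec_{\beta,\gamma}z$ and $z\succ_{\gamma,\beta}y$.

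Next I would regroup those eight terms into three natural blocks. The first block collects expressions of the shape $(x\prec_{\alpha,\beta}y)\prec_{\alpha\beta,\gamma}z-x\prec_{\alpha,\beta\gamma}(y\prec_{\beta,\gamma}z+y\succ_{\beta,\gamma}z)$, which by \eqref{eq:relativepreanti2} can be replaced by its $x\leftrightarrow z$ swapped counterpart. The second block collects expressions of the shape $(z\succ_{\gamma,\beta}y)\prec_{\gamma\beta,\alpha}x-z\succ_{\gamma,\beta\alpha}(y\prec_{\beta,\alpha}x)$ (up to placement of $x$ and $z$), which by \eqref{eq:relativepreanti1} admits the analogous swap. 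The remaining "cross'' terms are those mixing one $\prec$ and one $\succ$ in a configuration that does not fit either defining identity; I expect these terms to pair up and reassemble symmetrically once the two structural relations have been applied to the first two blocks.

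Finally, using the commutativity of $\Omega_c$ to identify indices such as $\alpha\beta\gamma=\alpha\gamma\beta$ and $\beta\gamma=\gamma\beta$ at the required places, the rearranged expression should fold back, via the definition \eqref{eq:underlyingpreLie2}, into $(x\blacktriangleleft_{\alpha,\gamma}z)\blacktriangleleft_{\alpha\gamma,\beta}y-x\blacktriangleleft_{\alpha,\gamma\beta}(z\blacktriangleleft_{\gamma,\beta}y)=(x,z,y)_{\blacktriangleleft_{\alpha,\gamma,\beta}}$, which is precisely \eqref{eq:associatorrightLieid}. The main obstacle is entirely one of bookkeeping: ensuring that the eight expanded terms are grouped in a way that exposes exactly the left-hand sides of \eqref{eq:relativepreanti1} and \eqref{eq:relativepreanti2}, and that the commutativity of $\Omega_c$ is invoked at precisely those points where the subscripts of $\prec$ and $\succ$ need to be rewritten. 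Since the roles of $\prec$ and $\succ$ in $\blacktriangleleft$ are swapped compared to those in $\blacktriangleright$, the computation is structurally identical to the previous theorem with $\prec$ and $\succ$ interchanged and the target identity replaced by the $y\leftrightarrow z$ symmetry required for right pre-Lie.
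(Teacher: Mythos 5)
Your proposal is correct and is exactly the argument the paper intends: the paper's own proof is only the phrase ``by straightforward calculation,'' and the expected calculation is the one you describe, mirroring the proof of the preceding theorem with $\prec$ and $\succ$ interchanged, grouping the eight expanded terms so that Eq.~\eqref{eq:relativepreanti2} handles the $(x\prec_{\alpha,\beta}y)\prec_{\alpha\beta,\gamma}z-x\prec_{\alpha,\beta\gamma}(y\prec_{\beta,\gamma}z)$ block, Eq.~\eqref{eq:relativepreanti1} handles the cross block $-(y\succ_{\beta,\alpha}x)\prec_{\alpha\beta,\gamma}z-z\succ_{\gamma,\alpha\beta}(x\prec_{\alpha,\beta}y)$, and commutativity of $\Omega_c$ reconciles the subscripts; I checked that this does yield $(x,y,z)_{\blacktriangleleft_{\alpha,\beta,\gamma}}=(x,z,y)_{\blacktriangleleft_{\alpha,\gamma,\beta}}$. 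Your reading of the (loosely worded) conclusion as the right pre-Lie identity \eqref{eq:associatorrightLieid} is also the correct one given the definition immediately preceding the proposition.
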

\begin{proof}
By straightforward calculation.
\end{proof}
\begin{thm}\label{thm_underLie}
Let $(A,\prec_{\omega_1, \omega_2}, \succ_{\omega_1, \omega_2}, \Omega_c)$
be an $\Omega_c$-relative pre-anti-flexible algebra.
There is an $\Omega_c$-relative Lie algebra structure
underling the $\Omega_c$-relative anti-flexible algebra 
$(A, \circledast_{_{\omega_1, \omega_2}}, \Omega_c)$, 
derived in  Proposition~\ref{prop:underreativeantiflexible}, given by, 
for any $x,y\in A$ and for any $\alpha, \beta\in \Omega_c$, 
\begin{eqnarray}
[x,y]_{_{\alpha, \beta}}:=
x\circledast_{_{\alpha, \beta}}y-
y\circledast_{_{\beta, \alpha}}x=
(x\succ_{\alpha, \beta}y+x\prec_{\alpha, \beta}y)-
(y\succ_{\beta,\alpha}x+y\prec_{\beta, \alpha}x).
\end{eqnarray}
\end{thm}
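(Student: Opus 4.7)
The plan is to imitate the proof of Theorem~\ref{theorelativeantiflexible-Lie}, using Proposition~\ref{prop:underreativeantiflexible} as the key input. More precisely, since Proposition~\ref{prop:underreativeantiflexible} already tells us that the underlying product $\circledast_{\omega_1,\omega_2}$ endows $A$ with the structure of an $\Omega_c$-relative anti-flexible algebra, it suffices to verify that the commutator of \emph{any} $\Omega_c$-relative anti-flexible algebra gives an $\Omega_c$-relative Lie algebra. So the two tasks are (i) skew-symmetry and (ii) the Jacobi identity, each stated in the form of Eqs.~\eqref{eq:LieFamilyskew} and~\eqref{eq:LieFamilyJacobi}.

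First I would check skew-symmetry, which is immediate from the definition: writing out
\begin{eqnarray*}
[x,y]_{_{\alpha,\beta}}+[y,x]_{_{\beta,\alpha}}
= (x\circledast_{_{\alpha,\beta}}y-y\circledast_{_{\beta,\alpha}}x)
 +(y\circledast_{_{\beta,\alpha}}x-x\circledast_{_{\alpha,\beta}}y)=0,
\end{eqnarray*}
so Eq.~\eqref{eq:LieFamilyskew} holds. This step poses no difficulty.

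Next I would tackle the Jacobi identity. The strategy is to expand
$[[x,y]_{_{\alpha,\beta}},z]_{_{\alpha\beta,\gamma}}
+[[y,z]_{_{\beta,\gamma}},x]_{_{\beta\gamma,\alpha}}
+[[z,x]_{_{\gamma,\alpha}},y]_{_{\gamma\alpha,\beta}}$
by substituting the defining formula for $[\cdot,\cdot]$ twice, then regroup the resulting twelve $\circledast$-products into six associators of $\circledast$. Exactly as in the computation in Theorem~\ref{theorelativeantiflexible-Lie}, using the commutativity of $\Omega_c$ to match indices, the sum becomes
\begin{eqnarray*}
(x,y,z)_{\circledast_{_{\alpha,\beta,\gamma}}}
+(y,z,x)_{\circledast_{_{\beta,\gamma,\alpha}}}
+(z,x,y)_{\circledast_{_{\gamma,\alpha,\beta}}}
-(z,y,x)_{\circledast_{_{\gamma,\beta,\alpha}}}
-(x,z,y)_{\circledast_{_{\alpha,\gamma,\beta}}}
-(y,x,z)_{\circledast_{_{\beta,\alpha,\gamma}}}.
\end{eqnarray*}
By Proposition~\ref{prop:underreativeantiflexible}, $(A,\circledast_{_{\omega_1,\omega_2}},\Omega_c)$ is an $\Omega_c$-relative anti-flexible algebra, so the identity Eq.~\eqref{eq:associatorid} forces each of the three pairs
$(x,y,z)_{\circledast_{_{\alpha,\beta,\gamma}}}=(z,y,x)_{\circledast_{_{\gamma,\beta,\alpha}}}$,
$(y,z,x)_{\circledast_{_{\beta,\gamma,\alpha}}}=(x,z,y)_{\circledast_{_{\alpha,\gamma,\beta}}}$,
$(z,x,y)_{\circledast_{_{\gamma,\alpha,\beta}}}=(y,x,z)_{\circledast_{_{\beta,\alpha,\gamma}}}$
to cancel, yielding zero.

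The only real obstacle is the bookkeeping in the twelve-term expansion: one must be careful that each index label appearing inside nested brackets corresponds to the correct composed index produced by the outer bracket and that the commutativity of $\Omega_c$ is invoked at the right moments to recognize, say, $\alpha\beta\cdot\gamma = \alpha(\beta\gamma)$ when aligning subscripts. Once this accounting is carried out carefully, the result reduces cleanly to Eq.~\eqref{eq:associatorid} applied three times, giving Eq.~\eqref{eq:LieFamilyJacobi} and completing the proof.
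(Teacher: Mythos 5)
Your proposal is correct and follows essentially the same route as the paper: verify skew-symmetry directly, expand the cyclic Jacobi sum into six $\circledast$-associators, and cancel them in pairs via the anti-flexible identity $(x,y,z)_{\circledast_{\alpha,\beta,\gamma}}=(z,y,x)_{\circledast_{\gamma,\beta,\alpha}}$ supplied by Proposition~\ref{prop:underreativeantiflexible}. The paper's own proof is just a terser version of this same computation, modeled on Theorem~\ref{theorelativeantiflexible-Lie}.
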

\begin{proof}
Let $x,y,z\in A$ and $\alpha, \beta, \gamma\in \Omega_c$. We have
\begin{eqnarray*}
[x,y]_{_{\alpha, \beta}}+
[y, x]_{_{\beta, \alpha}}=
x\circledast_{_{\alpha, \beta}}y-
y\circledast_{_{\beta, \alpha}}x
+y\circledast_{_{\beta, \alpha}}x-
x\circledast_{_{\alpha, \beta}}y=0.
\end{eqnarray*}
In addition, we have
\begin{eqnarray*}
[[x,y]_{_{\alpha, \beta}}, z]_{_{\alpha\beta, \gamma}}+
 [[y, z]_{_{\beta, \gamma}}, x]_{_{\beta\gamma, \alpha}}+
 [[z, x]_{_{\gamma, \alpha}}, y]_{_{\gamma\alpha, \beta}}
&=&
(x,y,z)_{\circledast_{\alpha,\beta, \gamma}}
+(y,z,x)_{\circledast_{\beta,\gamma,\alpha}}
+(z,x,y)_{\circledast_{\gamma, \alpha, \beta}}\cr&-&
(z, y ,z)_{\circledast_{\gamma,\beta,\alpha}}
-(x, z, y)_{\circledast_{\alpha,\gamma,\beta}}-
(y, x, z)_{\circledast_{\beta,\alpha,\gamma}}.
\end{eqnarray*}
According to Proposition~\ref{prop:underreativeantiflexible}, 
we deduce the $\Omega_c$-relative Jacobi identity.
Therefore, $A$ contains an $\Omega_c$-relative Lie algebra structure.
\end{proof}
\begin{thm}\label{thm_underalgebra}
Let $(A,\prec_{\omega_1, \omega_2}, \succ_{\omega_1, \omega_2}, \Omega_c)$
be an $\Omega_c$-relative pre-anti-flexible algebra such that its 
underlying $\Omega_c$-relative anti-flexible algebra is
$(A, \circledast_{_{\omega_1, \omega_2}}, \Omega_c)$.
The linear product given by for any $x,y\in A$ and 
for any $\alpha, \beta\in \Omega_c$, 
\begin{eqnarray}
x\circledcirc_{_{\alpha, \beta}} y=
x\circledast_{_{\alpha, \beta}}y+ 
y\circledast_{_{\beta, \alpha}}x,
\end{eqnarray}
is such that $(A, \circledcirc_{_{\omega_1, \omega_2}}, \Omega_c)$
satisfying the following relation, for any 
$x,y,z\in A$ and for any $\alpha, \beta, \gamma\in \Omega_c$, 
\begin{eqnarray}
(x,y,z)_{\circledcirc_{\alpha, \beta, \gamma}}=
[y, [x,z]_{_{\alpha, \gamma}}]_{_{\beta, \alpha\gamma}}, 
\end{eqnarray}
where
$(x,y,z)_{\circledcirc_{\alpha, \beta, \gamma}}=
(x\circledcirc_{_{\alpha, \beta}}y)\circledcirc_{_{\alpha\beta, \gamma}}z-
x\circledcirc_{_{\alpha, \beta\gamma}}(y\circledcirc_{_{\beta, \gamma}}z)$
and $[x,y]_{_{\alpha, \beta}}=
x\circledast_{_{\alpha, \beta}}y-
y\circledast_{_{\beta, \alpha}}x$.
\end{thm}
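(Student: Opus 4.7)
The plan is to imitate the argument of Theorem~\ref{thm_anticommutatorantiflexible}, replacing the family product $\ast_{_{\omega_1,\omega_2}}$ by the relative product $\circledast_{_{\omega_1,\omega_2}}$ throughout. The key input is Proposition~\ref{prop:underreativeantiflexible}, which tells us that $(A,\circledast_{_{\omega_1,\omega_2}},\Omega_c)$ is already an $\Omega_c$-relative anti-flexible algebra, so Eq.~\eqref{eq:identityantiflexible} (equivalently \eqref{eq:associatorid}) is at our disposal for $\circledast$.

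First, I would expand $(x,y,z)_{\circledcirc_{\alpha,\beta,\gamma}}$ by substituting the definition $x\circledcirc_{_{\alpha,\beta}}y=x\circledast_{_{\alpha,\beta}}y+y\circledast_{_{\beta,\alpha}}x$ on both occurrences in each term. This yields eight summands grouped as follows: a pair
\[
(x\circledast_{_{\alpha,\beta}}y)\circledast_{_{\alpha\beta,\gamma}}z
-x\circledast_{_{\alpha,\beta\gamma}}(y\circledast_{_{\beta,\gamma}}z)
=(x,y,z)_{\circledast_{\alpha,\beta,\gamma}},
\]
a pair
\[
z\circledast_{_{\gamma,\alpha\beta}}(y\circledast_{_{\beta,\alpha}}x)
-(z\circledast_{_{\gamma,\beta}}y)\circledast_{_{\gamma\beta,\alpha}}x
=-(z,y,x)_{\circledast_{\gamma,\beta,\alpha}},
\]
and four remaining ``cross'' terms. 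By Proposition~\ref{prop:underreativeantiflexible} these first two grouped expressions cancel, since $(x,y,z)_{\circledast_{\alpha,\beta,\gamma}}=(z,y,x)_{\circledast_{\gamma,\beta,\alpha}}$.

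Next, I would rearrange the four surviving cross terms and apply Eq.~\eqref{eq:identityantiflexible} to $\circledast$ at the triple $(x,z)$ bracketed against $y$ (exploiting commutativity of $\Omega_c$ to identify indices like $\alpha\gamma=\gamma\alpha$, $\beta\alpha\gamma=\beta\gamma\alpha$, etc.). After regrouping, the four terms rearrange into
\[
y\circledast_{_{\beta,\alpha\gamma}}(x\circledast_{_{\alpha,\gamma}}z)
+(z\circledast_{_{\gamma,\alpha}}x)\circledast_{_{\gamma\alpha,\beta}}y
-y\circledast_{_{\beta,\gamma\alpha}}(z\circledast_{_{\gamma,\alpha}}x)
-(x\circledast_{_{\alpha,\gamma}}z)\circledast_{_{\alpha\gamma,\beta}}y,
\]
which is exactly $[y,[x,z]_{_{\alpha,\gamma}}]_{_{\beta,\alpha\gamma}}$ by the definition of $[\cdot,\cdot]$ from Theorem~\ref{thm_underLie}.

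The only nontrivial step is bookkeeping: one must be careful to pair the eight $\circledast$-terms into two associators (which vanish by the anti-flexible identity) plus a commutator-of-commutator. The main obstacle, as in Theorem~\ref{thm_anticommutatorantiflexible}, is purely notational---verifying that all semigroup subscripts line up under the commutativity of $\Omega_c$, and choosing the correct grouping so that the anti-flexible identity is applied at the right triple. No new identity beyond Eq.~\eqref{eq:identityantiflexible} for $\circledast$ is needed, so the proof is a direct transcription of the earlier argument in the relative setting.
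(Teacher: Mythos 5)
Your proposal is correct and follows essentially the same route as the paper's own proof: expand the eight $\circledast$-terms, cancel $(x,y,z)_{\circledast_{\alpha,\beta,\gamma}}-(z,y,x)_{\circledast_{\gamma,\beta,\alpha}}$ using Proposition~\ref{prop:underreativeantiflexible}, and convert the four remaining cross terms into $[y,[x,z]_{_{\alpha,\gamma}}]_{_{\beta,\alpha\gamma}}$ by two further applications of the anti-flexible identity for $\circledast$. The paper's argument is exactly this transcription of Theorem~\ref{thm_anticommutatorantiflexible} to the relative setting, so no further comment is needed.
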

\begin{proof}
Let $x,y,z\in A$ and $\alpha, \beta, \gamma\in \Omega_c$. We have
\begin{eqnarray*}
(x,y,z)_{\circledcirc_{\alpha, \beta, \gamma}}&=&
(x\circledcirc_{_{\alpha, \beta}}y)\circledcirc_{_{\alpha\beta, \gamma}}z-
x\circledcirc_{_{\alpha, \beta\gamma}}(y\circledcirc_{_{\beta, \gamma}}z)\cr 
&=&
(x\circledast_{_{\alpha, \beta}}y)\circledast_{_{\alpha\beta, \gamma}}z+
(y\circledast_{_{\beta, \alpha}}x)\circledast_{_{\beta\alpha, \gamma}}z
+z\circledast_{_{\gamma, \alpha\beta}}(x\circledast_{_{\alpha, \beta}}y)
+z\circledast_{_{\gamma, \alpha\beta}}(y\circledast_{_{\beta, \alpha}}x)
\cr&-&
x\circledast_{_{\alpha, \beta\gamma}}(y\circledast_{_{\beta, \gamma}}z)-
x\circledast_{_{\alpha, \beta\gamma}}(z\circledast_{_{\gamma, \beta}}y)-
(y\circledast_{_{\beta, \gamma}}z)\circledast_{_{\beta\gamma, \alpha}}x-
(z\circledast_{_{\gamma, \beta}}y)\circledast_{_{\beta\gamma, \alpha}}x
\cr
&=&
(y\circledast_{_{\beta, \alpha}}x)\circledast_{_{\beta\alpha, \gamma}}z
+z\circledast_{_{\gamma, \alpha\beta}}(x\circledast_{_{\alpha, \beta}}y)-
x\circledast_{_{\alpha, \beta\gamma}}(z\circledast_{_{\gamma, \beta}}y)-
(y\circledast_{_{\beta, \gamma}}z)\circledast_{_{\beta\gamma, \alpha}}x
\cr
&=&
y\circledast_{_{\beta, \alpha\gamma}}(x\circledast_{_{\alpha, \gamma}}z)
+(z\circledast_{_{\gamma, \alpha}}x)\circledast_{_{\gamma\alpha, \beta}}y-
(x\circledast_{_{\alpha, \gamma}}z)\circledast_{_{\alpha\gamma, \beta}}y-
y\circledast_{_{\beta, \gamma\alpha}}(z\circledast_{_{\gamma, \alpha}}x)
\cr
&=&[y, [x,z]_{_{\alpha, \gamma}}]_{_{\beta, \alpha\gamma}}.
\end{eqnarray*}
Note that second and third equal sign upward  are due to 
Proposition~\ref{prop:underreativeantiflexible}.
\end{proof}
\begin{pro}
Let $(A,\prec_{\omega_1, \omega_2}, \succ_{\omega_1, \omega_2}, \Omega_c)$
be an $\Omega_c$-relative pre-anti-flexible algebra.
Consider the algebra
$(A, \circledcirc_{_{\omega_1, \omega_2}}, \Omega_c)$ defined above. We have for any 
$x,y,z\in A$ and for any $\alpha, \beta, \gamma\in \Omega_c$, 
\begin{eqnarray}
(x,y,z)_{\circledcirc_{\alpha, \beta, \gamma}}+
(z,x,y)_{\circledcirc_{\gamma, \alpha, \beta }}+
(y,z,x)_{\circledcirc_{\beta, \gamma, \alpha}}=0.
\end{eqnarray}
\end{pro}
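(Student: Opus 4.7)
My plan is to reduce the cyclic sum of $\circledcirc$-associators to the Jacobi identity of the underlying $\Omega_c$-relative Lie algebra, using Theorem~\ref{thm_underalgebra} as the bridge.

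\textbf{Step 1.} First I apply the formula of Theorem~\ref{thm_underalgebra}, namely
\[
(x,y,z)_{\circledcirc_{\alpha, \beta, \gamma}} = [y, [x,z]_{_{\alpha, \gamma}}]_{_{\beta, \alpha\gamma}},
\]
to each of the three associators appearing in the cyclic sum. Substituting $(x,y,z)\mapsto(z,x,y)$ together with $(\alpha,\beta,\gamma)\mapsto(\gamma,\alpha,\beta)$ rewrites the second term as $[x,[z,y]_{_{\gamma,\beta}}]_{_{\alpha,\gamma\beta}}$, and similarly the third term becomes $[z,[y,x]_{_{\beta,\alpha}}]_{_{\gamma,\beta\alpha}}$. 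Thus the quantity to be shown to vanish equals
\[
[y,[x,z]_{_{\alpha,\gamma}}]_{_{\beta,\alpha\gamma}} + [x,[z,y]_{_{\gamma,\beta}}]_{_{\alpha,\gamma\beta}} + [z,[y,x]_{_{\beta,\alpha}}]_{_{\gamma,\beta\alpha}}.
\]

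\textbf{Step 2.} Next I invoke the skew-symmetry relation~\eqref{eq:LieFamilyskew} of the $\Omega_c$-relative Lie bracket established in Theorem~\ref{thm_underLie}, namely $[a,b]_{_{\mu,\nu}} = -[b,a]_{_{\nu,\mu}}$, applied to each outer bracket. This turns the above expression into
\[
-\bigl\{[[y,x]_{_{\beta,\alpha}},z]_{_{\beta\alpha,\gamma}} + [[x,z]_{_{\alpha,\gamma}},y]_{_{\alpha\gamma,\beta}} + [[z,y]_{_{\gamma,\beta}},x]_{_{\gamma\beta,\alpha}}\bigr\}.
\]
Specializing the Jacobi identity~\eqref{eq:LieFamilyJacobi} with the triple $(u,v,w)=(y,x,z)$ and parameters $(\mu,\nu,\rho)=(\beta,\alpha,\gamma)$ shows precisely that the braced sum vanishes, hence the cyclic sum equals zero, which is the claim.

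\textbf{Main obstacle.} There is no conceptual difficulty once the machinery of Theorems~\ref{thm_underLie} and~\ref{thm_underalgebra} is in place; the only real pitfall is bookkeeping. In the $\Omega_c$-relative setting every bracket carries an ordered pair of semigroup indices, and the formula from Theorem~\ref{thm_underalgebra} already performs a nontrivial permutation of these indices (the middle variable migrates to the outermost slot). I therefore expect the most delicate step to be verifying that after cycling $(x,y,z)\mapsto(z,x,y)$ together with $(\alpha,\beta,\gamma)\mapsto(\gamma,\alpha,\beta)$ and then applying skew-symmetry, the three resulting outer brackets carry exactly the index patterns that match the Jacobi identity for some permutation of the variables. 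A clean way to guarantee this is to fix a normal form for each cyclic term before applying skew-symmetry, so the triple $(y,x,z)$ with parameters $(\beta,\alpha,\gamma)$ emerges uniformly.
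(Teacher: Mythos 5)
Your proof is correct and follows the same route the paper intends: the paper's one-line proof simply cites Theorem~\ref{thm_underalgebra} (associator equals double bracket), Theorem~\ref{thm_underLie} (skew-symmetry and Jacobi for $[\cdot,\cdot]_{_{\omega_1,\omega_2}}$), and Proposition~\ref{prop:underreativeantiflexible}, which is exactly the reduction you carry out explicitly. Your index bookkeeping checks out, so your write-up is in fact a more complete version of the paper's argument.
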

\begin{proof}
According to Theorem~\ref{thm_underLie} and
Proposition~\ref{prop:underreativeantiflexible} and 
Theorem~\ref{thm_underalgebra}, the above equation is satisfied.
\end{proof}

\begin{pro}
Let $(A,\prec_{\omega_1, \omega_2}, \succ_{\omega_1, \omega_2}, \Omega_c)$
be an $\Omega_c$-relative pre-anti-flexible algebra.
Suppose that, for any $\omega_1, \omega_2\in \Omega_c$,  the operations 
$\prec_{\omega_1, \omega_2}$ are independent of $\omega_1$ and the 
operations $\succ_{\omega_1, \omega_2} $ are independent of 
$\omega_2$. Then $A$ possesses:
\begin{enumerate}
\item 
a pre-anti-flexible family 
algebra structure and conversely, if 
$A$  possesses a pre-anti-flexible family 
algebra structure, then it is an 
$\Omega_c$-relative pre-anti-flexible algebra such that 
$\prec_{\omega_1, \omega_2}$ are independent of $\omega_1$ and the 
operations $\succ_{\omega_1, \omega_2} $ are independent of 
$\omega_2$. 
\item 
a left pre-Lie family 
algebra structure and conversely, if 
$A$  possesses a left pre-Lie family 
algebra structure, then it is an 
$\Omega_c$-relative  pre-Lie algebra such that 
$\prec_{\omega_1, \omega_2}$ are independent of $\omega_1$ and the 
operations $\succ_{\omega_1, \omega_2} $ are independent of 
$\omega_2$. 
\item 
a right pre-Lie family 
algebra structure and conversely, if 
$A$  possesses a right pre-Lie family 
algebra structure, then it is an 
$\Omega_c$-relative right-symmetric algebra such that 
$\prec_{\omega_1, \omega_2}$ are independent of $\omega_1$ and the 
operations $\succ_{\omega_1, \omega_2}$ are independent of 
$\omega_2$. 
\item an $\Omega_c$-relative Lie algebra structure.
\end{enumerate}
\end{pro}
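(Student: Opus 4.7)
The plan is to observe that the independence hypothesis lets the two-index operations collapse to one-index family operations, after which the relative identities reduce term-by-term to the family identities already studied in Section~\ref{section2}; parts (2), (3), (4) then follow from theorems already proved. Concretely, set $x\succ_{\alpha}y := x\succ_{\alpha,\omega_2}y$ (well-defined since the operation is independent of $\omega_2$) and $x\prec_{\beta}y := x\prec_{\omega_1,\beta}y$ (well-defined by independence from $\omega_1$). These are the candidate family operations.

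For part (1), I would substitute into Eq.~\eqref{eq:relativepreanti1}: the left-hand side $(x\succ_{\alpha,\beta}y)\prec_{\alpha\beta,\gamma}z - x\succ_{\alpha,\beta\gamma}(y\prec_{\beta,\gamma}z)$ collapses to $(x\succ_{\alpha}y)\prec_{\gamma}z - x\succ_{\alpha}(y\prec_{\gamma}z)$, and the right-hand side similarly collapses, yielding exactly Eq.~\eqref{eq:preanti1} after relabeling $\gamma\mapsto\beta$. The same substitution applied to Eq.~\eqref{eq:relativepreanti2}, using commutativity of $\Omega_c$ to identify $\beta\alpha=\alpha\beta$, yields Eq.~\eqref{eq:preanti2}. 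The converse direction is essentially tautological: starting from family operations $\succ_{\omega},\prec_{\omega}$ on $A$, one defines $\succ_{\alpha,\beta}:=\succ_{\alpha}$ and $\prec_{\alpha,\beta}:=\prec_{\beta}$, which are manifestly independent of $\omega_2$ and $\omega_1$ respectively; verifying the relative identities amounts to reversing the collapse performed above.

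Parts (2), (3), (4) require no new calculation. Having established (1), the formulas Eq.~\eqref{eq:prodleftpreLie} and Eq.~\eqref{eq:prodrightpreLie} applied to the resulting pre-anti-flexible family algebra produce, respectively, a left pre-Lie family algebra structure and a right pre-Lie family algebra structure on $A$; the converses here follow by the same collapse/uncollapse correspondence together with the observation that $\triangleright_{\omega}$ built from $\succ_{\omega},\prec_{\omega}$ via Eq.~\eqref{eq:prodleftpreLie} transfers to a relative pre-Lie operation $\blacktriangleright_{\alpha,\beta}$ via Eq.~\eqref{eq:underlyingpreLie1} (and analogously for the right case via Eq.~\eqref{eq:underlyingpreLie2}), both being independent of the appropriate index as required. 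For (4), apply Theorem~\ref{theorelativeantiflexible-Lie} to the pre-anti-flexible family algebra obtained in (1); no converse is asserted.

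The main obstacle is purely bookkeeping: one must verify that in each relative identity every occurrence of $\succ_{\cdot,\cdot}$ carries the independence-preserving index (namely its first slot) and every occurrence of $\prec_{\cdot,\cdot}$ carries its second slot, so that the reduction is consistent and no residual dependence on the dropped indices survives. A careful side-by-side inspection of Eq.~\eqref{eq:relativepreanti1}--\eqref{eq:relativepreanti2} against Eq.~\eqref{eq:preanti1}--\eqref{eq:preanti2} confirms this, with the commutativity of $\Omega_c$ handling the remaining asymmetries in the composed indices.
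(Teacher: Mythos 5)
Your proposal is correct and follows essentially the same route as the paper, whose entire proof is the one-line observation that under the independence hypothesis Eq.~\eqref{eq:relativepreanti1}--\eqref{eq:relativepreanti2} collapse to Eq.~\eqref{eq:preanti1}--\eqref{eq:preanti2}; your version merely spells out the index bookkeeping and handles parts (2)--(4) explicitly by citing the family-algebra theorems of Section~\ref{section2}, which is more detail than the paper gives but no different in substance.
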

\begin{proof}
Under divers assumptions,  Eq.~\eqref{eq:preanti1} 
is expressed by Eq.~\eqref{eq:relativepreanti1}
and 
Eq.~\eqref{eq:preanti2} 
is translated by Eq.~\eqref{eq:relativepreanti2}.
\end{proof}
\section{Rota-Baxter 
operators and relative anti-flexible algebras}\label{section4}
This section deals with the using of the 
Rota-Baxter operators underlying relative 
anti-flexible and Lie algebras
and a generalization of  Rota-Baxter 
operator defined on
relative anti-flexible algebra to built  
relative pre-anti-flexible algebra.
It is proved that a Rota-Baxter operator
define underlying 
relative Lie algebra subjoined to 
relative anti-flexible algebra
induces a relative pre-anti-flexible algebra under
some constraints.
\begin{defi}
Let $(A, \cdot_{\omega_1, \omega_2}, \Omega_c)$ be an 
$\Omega_c$-relative anti-flexible algebra.
A Rota-Baxter operator on $A$ is a family of linear operators
$R_{{B}_{\alpha}}:A\rightarrow A$, for any $\alpha \in \Omega_c$ 
which satisfying the following relation, for any 
$x,y\in A$ and any $\alpha, \beta\in \Omega_c$,
\begin{eqnarray}
R_{{B}_{\alpha}}(x)\cdot_{\alpha, \beta}R_{{B}_{\beta}}(y)=
R_{{B}_{\alpha\beta}}(R_{{B}_{\alpha}}(x)\cdot_{\alpha, \beta}y+
x\cdot_{\alpha, \beta}R_{{B}_{\beta}}(y)).
\end{eqnarray}
\end{defi}
\begin{defi}
Let $(A, \cdot_{\omega_1, \omega_2}, \Omega_c)$ 
be an $\Omega_c$-relative anti-flexible algebra.
A generalized Rota-Baxter linear operator on $A$ is 
a family of $G_{{RB}_{\alpha}}:A\rightarrow A$ 
for any $\alpha\in \Omega_c$ such that 
for any $\alpha, \beta, \gamma\in \Omega_c$ and any 
$x, y, z\in A$, we have 
\begin{eqnarray}\label{eq:generalizedRB}
&&(G_{{RB}_{\alpha\beta}}(G_{{RB}_{\alpha}}(x)\cdot_{\alpha, \beta} y
+x\cdot_{\alpha, \beta}G_{{RB}_{\beta}}(y))
-G_{{RB}_{\alpha}}(x)\cdot_{\alpha, \beta}G_{{RB}_{\beta}}(y)
)\cdot_{\alpha\beta, \gamma}z\cr&&+ 
z\cdot_{\gamma,\beta\alpha}
(G_{{RB}_{\beta}}(y)\cdot_{\beta, \alpha}G_{{RB}_{\alpha}}(x)
-G_{{RB}_{\beta\alpha}}(G_{{RB}_{\beta}}(y)\cdot_{\beta, \alpha}x
+y\cdot_{\beta,\alpha}G_{{RB}_{\alpha}}(x)))=0.
\end{eqnarray}
\end{defi}
\begin{pro}\label{prop:GRB-pre-anti-flexible}
Let $G_{{RB}_{\alpha}}: A\rightarrow A$
be a generalized Rota-Baxter linear maps defined 
on an $\Omega_c$-relative anti-flexible algebra 
$(A, \cdot_{\omega_1, \omega_2}, \Omega_c)$.
Defining for any $x,y\in A$ 
and any $\alpha, \beta\in \Omega_c$,
\begin{eqnarray}\label{eq:relativePAF_antiflexible}
x\prec_{\alpha, \beta} y
:=x\cdot_{\alpha, \beta} G_{{RB}_{\beta}}(y),\;\;
x\succ_{\alpha, \beta} y:=G_{{RB}_{\alpha}}(x)\cdot_{\alpha, \beta}y, 
\end{eqnarray}
then, $(A, \prec_{\alpha, \beta}, \succ_{\alpha, \beta}, \Omega_c)$
is turns 
to an $\Omega_c$-relative pre-anti-flexible algebra.
The converse is true.
\end{pro}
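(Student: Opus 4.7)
The plan is to substitute the definitions \eqref{eq:relativePAF_antiflexible} into the two defining axioms \eqref{eq:relativepreanti1} and \eqref{eq:relativepreanti2} of an $\Omega_c$-relative pre-anti-flexible algebra and show that each reduces, via the underlying relative anti-flexibility identity together with the generalized Rota-Baxter identity \eqref{eq:generalizedRB}, to a tautology.

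First I would verify \eqref{eq:relativepreanti1}. Substituting the definitions, both sides turn into ordinary associators in $(A,\cdot_{\omega_1,\omega_2},\Omega_c)$: the LHS becomes $(G_{RB_\alpha}(x),\, y,\, G_{RB_\gamma}(z))_{\alpha,\beta,\gamma}$ and the RHS becomes $(G_{RB_\gamma}(z),\, y,\, G_{RB_\alpha}(x))_{\gamma,\beta,\alpha}$. Equality of these two associators is exactly the relative anti-flexibility identity \eqref{eq:associatorid} applied to the elements $G_{RB_\alpha}(x), y, G_{RB_\gamma}(z)$. So \eqref{eq:relativepreanti1} holds without using the Rota-Baxter condition at all.

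Next I would handle \eqref{eq:relativepreanti2}, which is the core of the argument. After substitution the difference LHS $-$ RHS reads
\begin{align*}
&\bigl(G_{RB_\alpha}(x)\cdot_{\alpha,\beta}y + x\cdot_{\alpha,\beta}G_{RB_\beta}(y)\bigr)\text{ acted on by }G_{RB_{\alpha\beta}},\text{ then }\cdot_{\alpha\beta,\gamma}z,\\
&\quad - G_{RB_\alpha}(x)\cdot_{\alpha,\beta\gamma}\bigl(G_{RB_\beta}(y)\cdot_{\beta,\gamma}z\bigr)\\
&\quad - \bigl(z\cdot_{\gamma,\beta}G_{RB_\beta}(y)\bigr)\cdot_{\gamma\beta,\alpha}G_{RB_\alpha}(x)\\
&\quad + z\cdot_{\gamma,\beta\alpha}G_{RB_{\beta\alpha}}\bigl(y\cdot_{\beta,\alpha}G_{RB_\alpha}(x)+G_{RB_\beta}(y)\cdot_{\beta,\alpha}x\bigr).
\end{align*}
The key trick is to add and subtract $G_{RB_\alpha}(x)\cdot_{\alpha,\beta}G_{RB_\beta}(y)$ inside the outer $\cdot_{\alpha\beta,\gamma}z$ term and $G_{RB_\beta}(y)\cdot_{\beta,\alpha}G_{RB_\alpha}(x)$ inside the outer $z\cdot_{\gamma,\beta\alpha}(\cdot)$ term. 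The ``added'' pieces rearrange into the associator identity \eqref{eq:associatorid} applied to the triple $(G_{RB_\alpha}(x), G_{RB_\beta}(y), z)$, which cancels them. The ``subtracted'' pieces assemble precisely into the LHS of the generalized Rota-Baxter identity \eqref{eq:generalizedRB}, which vanishes by hypothesis. Hence LHS $=$ RHS and \eqref{eq:relativepreanti2} is established.

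For the converse, I would observe that the entire reduction above is reversible: axiom \eqref{eq:relativepreanti1} is automatic from relative anti-flexibility and carries no content, while \eqref{eq:relativepreanti2} after the same rearrangement forces the expression in \eqref{eq:generalizedRB} to be zero for all $x,y,z\in A$ and all $\alpha,\beta,\gamma\in\Omega_c$. This yields that $G_{RB_\alpha}$ is a generalized Rota-Baxter family, completing the equivalence.

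The main obstacle is purely bookkeeping in step two: keeping track of how the indices $\alpha,\beta,\gamma$ pair up in each $\cdot_{\omega_1,\omega_2}$ so that the telescoping cancellation using \eqref{eq:associatorid} lines up exactly with the two clusters of terms in \eqref{eq:generalizedRB}. No conceptual difficulty is expected beyond this.
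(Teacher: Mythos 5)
Your proposal is correct and follows essentially the same route as the paper: axiom \eqref{eq:relativepreanti1} reduces directly to the anti-flexibility of the associators $(G_{{RB}_{\alpha}}(x),y,G_{{RB}_{\gamma}}(z))$, and axiom \eqref{eq:relativepreanti2}, after adding and subtracting $G_{{RB}_{\alpha}}(x)\cdot_{\alpha,\beta}G_{{RB}_{\beta}}(y)$ and $G_{{RB}_{\beta}}(y)\cdot_{\beta,\alpha}G_{{RB}_{\alpha}}(x)$, splits into an associator pair cancelled by \eqref{eq:associatorid} plus exactly the expression in \eqref{eq:generalizedRB}, giving the equivalence in both directions.
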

\begin{proof}
For any $x,y,z\in A$ and for any $\alpha, \beta, \gamma\in \Omega_c$, we have
\begin{eqnarray*}
(x\succ_{\alpha, \beta}y)\prec_{\alpha\beta, \gamma}z
-x\succ_{\alpha, \beta\gamma}(y\prec_{\alpha, \beta}z)&=&
(G_{{RB}_{\alpha}}(x), y, G_{{RB}_{\gamma}}(z))_{_{\alpha, \beta, \gamma}},\\
(z\succ_{\gamma, \beta}y)\prec_{\gamma\beta, \alpha}x
-z\succ_{\gamma, \beta\alpha}(y\prec_{\beta,  \alpha}x)&=&
(G_{{RB}_{\gamma}}(z), y, G_{{RB}_{\alpha}}(x))_{{\gamma, \beta, \alpha}},\\
(x\prec_{\alpha, \beta}y+x\succ_{\alpha, \beta}y)\succ_{\alpha\beta, \gamma}z-
x\succ_{\alpha, \beta\gamma}(y\succ_{\beta, \gamma}z)&=&
(z, G_{{RB}_{\beta}}(y), G_{{RB}_{\alpha}}(x))_{{\gamma, \beta, \alpha}}+\cr
&&
(G_{{RB}_{\alpha\beta}}(G_{{RB}_{\alpha}}(x)\cdot_{\alpha, \beta}y+
x\cdot_{\alpha, \beta} G_{{RB}_{\beta}}(y))\cr&&-
(G_{{RB}_{\alpha}}(x)\cdot_{\alpha,
\beta}G_{{RB}_{\beta}}(y)))\cdot_{\alpha\beta, \gamma}z,
\\
(z\prec_{\gamma, \beta} y)\prec_{\gamma\beta,\alpha}x
-z\prec_{\gamma, \beta\alpha}
(y\succ_{\beta, \alpha} x+y\prec_{\beta, \alpha}x)&=&
(G_{{RB}_{\alpha}}(x), G_{{RB}_{\beta}}(y), z)_{_{\alpha,
\beta, \gamma}}+\cr
&&z\cdot_{\gamma, \beta\alpha}
(G_{{RB}_{\beta}}(y)\cdot_{ \beta, \alpha}G_{{RB}_{\alpha}}(x)-\cr
&&
G_{{RB}_{\beta\alpha}}(G_{{RB}_{\beta}}(y)\cdot_{\beta, \alpha}x
+y\cdot_{\beta, \alpha}G_{{RB}_{\alpha}}(x))).
\end{eqnarray*}
Therefore, Eq.\eqref{eq:relativePAF_antiflexible} turns 
$A$ into an $\Omega_c$-relative pre-anti-flexible algebra
if and only if $G_{{RB}_{\alpha}}$, for any $\alpha\in A$,
satisfying Eq.~\eqref{eq:generalizedRB}.
\end{proof}
\begin{cor}
Any Rota-Baxter operator on an 
$\Omega_c$-relative anti-flexible algebra 
induces an $\Omega_c$-relative 
pre-anti-flexible algebra.
\end{cor}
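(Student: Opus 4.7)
The plan is to reduce the corollary directly to Proposition~\ref{prop:GRB-pre-anti-flexible} by showing that every Rota-Baxter family operator on an $\Omega_c$-relative anti-flexible algebra is, in particular, a generalized Rota-Baxter family operator in the sense of Eq.~\eqref{eq:generalizedRB}. Once this inclusion is established, the relative pre-anti-flexible algebra structure on $A$ is produced via Eq.~\eqref{eq:relativePAF_antiflexible} with $G_{RB_\alpha}$ replaced by $R_{B_\alpha}$, and no further verification is required.

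More precisely, first I would read off the Rota-Baxter identity written in the form
\[
R_{B_{\alpha\beta}}\bigl(R_{B_\alpha}(x)\cdot_{\alpha,\beta} y + x\cdot_{\alpha,\beta} R_{B_\beta}(y)\bigr) - R_{B_\alpha}(x)\cdot_{\alpha,\beta} R_{B_\beta}(y) = 0
\]
for all $x,y\in A$ and all $\alpha,\beta\in\Omega_c$. Substituting this into the left-most factor of the first summand of Eq.~\eqref{eq:generalizedRB} kills that summand on the nose, since multiplying $0$ on the right by $z$ gives $0$. For the second summand, I would apply the same identity after swapping the roles of $(x,\alpha)$ and $(y,\beta)$: the inner bracket
\[
G_{RB_\beta}(y)\cdot_{\beta,\alpha} G_{RB_\alpha}(x) - G_{RB_{\beta\alpha}}\bigl(G_{RB_\beta}(y)\cdot_{\beta,\alpha} x + y\cdot_{\beta,\alpha} G_{RB_\alpha}(x)\bigr)
\]
coincides (up to sign) with the Rota-Baxter relation applied to $(y,x)$ with indices $(\beta,\alpha)$, hence vanishes identically. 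Multiplying on the left by $z$ then yields $0$, and adding the two summands gives Eq.~\eqref{eq:generalizedRB}.

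At that point the hypothesis of Proposition~\ref{prop:GRB-pre-anti-flexible} is satisfied, so the operations
\[
x\prec_{\alpha,\beta} y := x\cdot_{\alpha,\beta} R_{B_\beta}(y),\qquad x\succ_{\alpha,\beta} y := R_{B_\alpha}(x)\cdot_{\alpha,\beta} y
\]
endow $A$ with an $\Omega_c$-relative pre-anti-flexible algebra structure, which is exactly what the corollary asserts.

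The only real subtlety—and therefore the main thing to be careful about—is a bookkeeping one: making sure the index substitution $(x,\alpha)\leftrightarrow(y,\beta)$ in the Rota-Baxter identity lines up with the specific indices $(\beta,\alpha)$ appearing inside the second summand of Eq.~\eqref{eq:generalizedRB} and with the outer multiplication $\cdot_{\gamma,\beta\alpha}$. Since $\Omega_c$ is commutative, $\beta\alpha=\alpha\beta$ and the indices on the Rota-Baxter operator match, so this causes no obstruction. No extra computation involving the anti-flexible associator identity \eqref{eq:identityantiflexible} is needed, which is why the corollary is a clean consequence of the proposition rather than an independent verification.
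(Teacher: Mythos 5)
Your proposal is correct and is exactly the intended argument: the Rota-Baxter identity makes each of the two bracketed factors in Eq.~\eqref{eq:generalizedRB} vanish identically (the second being the identity applied to $(y,x)$ with indices $(\beta,\alpha)$), so every Rota-Baxter family operator is a generalized one and Proposition~\ref{prop:GRB-pre-anti-flexible} applies directly. This matches the paper, which leaves the corollary as an immediate consequence of that proposition.
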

In the sequel of this section, we suppose that 
the $\Omega_c$-relative anti-flexible algebra
$(A, \cdot_{\omega_1, \omega_2}, \Omega_c)$ is such that 
the elements 
\begin{eqnarray}\label{eq:element_commute}
\varphi_{_\alpha}(x)\cdot_{_{\alpha, \beta}} \varphi_{_\beta}(y)-
\varphi_{_{\alpha\beta}}(x\cdot_{_{\alpha, \beta}}\varphi_{_\beta}(y)+
\varphi_{_\alpha}(x)\cdot_{_{\alpha, \beta}}y), \;\;
\forall x,y\in A,\; 
\forall \alpha, \beta \in \Omega_c,
\end{eqnarray}
commute with 
any other elements in $A$, where 
$\varphi_{_\alpha}: A\rightarrow A$ is a linear family maps on $A$ for 
$\alpha\in \Omega_c$, that is for any $x,y,z\in A$ and for any 
$\alpha, \beta, \gamma\in \Omega_c$,
we have 
\begin{eqnarray}\label{eq:comu-RB}
z\cdot_{_{\gamma, \beta\alpha}}
(\varphi_{_\alpha}(x)\cdot_{_{\alpha, \beta}} \varphi_{_\beta}(y)-
\varphi_{_{\alpha\beta}}(x\cdot_{_{\alpha, \beta}}\varphi_{_\beta}(y)+
\varphi_{_\alpha}(x)\cdot_{_{\alpha, \beta}}y))=\cr
(\varphi_{_\alpha}(x)\cdot_{_{\alpha, \beta}} \varphi_{_\beta}(y)-
\varphi_{_{\alpha\beta}}(x\cdot_{_{\alpha, \beta}}\varphi_{_\beta}(y)+
\varphi_{_\alpha}(x)\cdot_{_{\alpha, \beta}}y))
\cdot_{_{\alpha\beta, \gamma}}z.
\end{eqnarray}.
\begin{defi}
A Rota-Baxter operator on  a relative Lie algebra
$(A, [\cdot, \cdot]_{_{\omega_1, \omega_2}}, \Omega_c)$
is a family of linear operators $R_{B_{\alpha}}: A\rightarrow A$
for any $\alpha\in \Omega_c$ and satisfying 
for any $x,y\in A$, and for any $\alpha, \beta\in \Omega_c$, 
\begin{eqnarray}\label{eq:RB_Lie}
[R_{{B}_{\alpha}}(x), R_{{B}_{\beta}}(y)]_{_{\alpha, \beta}}=
R_{{B}_{\alpha\beta}}([x, R_{{B}_{\beta}}(y)]_{_{\alpha, \beta}}
+[R_{{B}_{\alpha}}(x), y]_{_{\alpha, \beta}}). 
\end{eqnarray}
\end{defi}
\begin{pro}
Let $(A, \cdot_{\omega_1, \omega_2}, \Omega_c)$ be a relative 
anti-flexible algebra equipped with a linear maps family 
$\varphi_{_\alpha}:A\rightarrow A$, 
for any $\alpha\in \Omega_c$, in which the  elements 
as the form given in Eq.~\eqref{eq:element_commute}
commute with any other element in $A$.
The linear products family given by, 
for any $x,y\in A$ and any $\alpha, \beta\in \Omega_c$, 
\begin{eqnarray}
x\prec_{\alpha, \beta} y=x\cdot_{_{\alpha, \beta}} 
\varphi_{_\beta}(y);\quad
x\succ_{\alpha, \beta} y=
\varphi_{_\alpha}(x)\cdot_{_{\alpha, \beta}} y
\end{eqnarray}
define an $\Omega_c$-relative pre-anti-flexible 
structures if and only if the linear maps 
family $\varphi_{\alpha}$ is a Rota-Baxter operator
on the underlying relative Lie algebra of the relative 
anti-flexible algebra $(A, \cdot_{\omega_1, \omega_2}, \Omega_c)$.
\end{pro}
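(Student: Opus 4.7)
The plan is to mirror the argument of Proposition~\ref{prop:GRB-pre-anti-flexible}, now with $\varphi_\alpha$ in place of $G_{RB_\alpha}$, and then use the commutativity hypothesis \eqref{eq:comu-RB} to reformulate the resulting condition as the Rota-Baxter identity \eqref{eq:RB_Lie} on the underlying relative Lie algebra $[a,b]_{\alpha,\beta}:=a\cdot_{\alpha,\beta}b-b\cdot_{\beta,\alpha}a$.

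For \eqref{eq:relativepreanti1}, substituting $x\prec_{\alpha,\beta}y=x\cdot_{\alpha,\beta}\varphi_\beta(y)$ and $x\succ_{\alpha,\beta}y=\varphi_\alpha(x)\cdot_{\alpha,\beta}y$ collapses both sides to anti-flexible associators $(\varphi_\alpha(x),y,\varphi_\gamma(z))_{\alpha,\beta,\gamma}$ and $(\varphi_\gamma(z),y,\varphi_\alpha(x))_{\gamma,\beta,\alpha}$, which agree by \eqref{eq:associatorid}. Hence \eqref{eq:relativepreanti1} is automatic. For \eqref{eq:relativepreanti2}, after the same substitution I add and subtract $(\varphi_\alpha(x)\cdot_{\alpha,\beta}\varphi_\beta(y))\cdot_{\alpha\beta,\gamma}z$ on the LHS and $z\cdot_{\gamma,\beta\alpha}(\varphi_\beta(y)\cdot_{\beta,\alpha}\varphi_\alpha(x))$ on the RHS, so each side splits into an associator plus a term involving the Rota-Baxter defect
\[
D(x,y;\alpha,\beta):=\varphi_\alpha(x)\cdot_{\alpha,\beta}\varphi_\beta(y)-\varphi_{\alpha\beta}\bigl(\varphi_\alpha(x)\cdot_{\alpha,\beta}y+x\cdot_{\alpha,\beta}\varphi_\beta(y)\bigr).
\]
The associators cancel by \eqref{eq:associatorid}, so \eqref{eq:relativepreanti2} is equivalent to $D(x,y;\alpha,\beta)\cdot_{\alpha\beta,\gamma}z+z\cdot_{\gamma,\beta\alpha}D(y,x;\beta,\alpha)=0$ for all $x,y,z\in A$ and all $\alpha,\beta,\gamma\in\Omega_c$.

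The key step is to apply the commutativity hypothesis \eqref{eq:comu-RB} (with $(y,x;\beta,\alpha)$ in place of $(x,y;\alpha,\beta)$, using $\alpha\beta=\beta\alpha$) to rewrite $z\cdot_{\gamma,\beta\alpha}D(y,x;\beta,\alpha)$ as $D(y,x;\beta,\alpha)\cdot_{\alpha\beta,\gamma}z$. The equivalent condition becomes $\bigl(D(x,y;\alpha,\beta)+D(y,x;\beta,\alpha)\bigr)\cdot_{\alpha\beta,\gamma}z=0$ for all $z$, and a direct expansion identifies this combination (up to an overall sign) with $[\varphi_\alpha(x),\varphi_\beta(y)]_{\alpha,\beta}-\varphi_{\alpha\beta}\bigl([\varphi_\alpha(x),y]_{\alpha,\beta}+[x,\varphi_\beta(y)]_{\alpha,\beta}\bigr)$, i.e.\ the defect measuring how far $\varphi$ is from being a Rota-Baxter operator on the underlying relative Lie algebra. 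Its vanishing against every $z$ is thus equivalent to \eqref{eq:RB_Lie}.

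The main obstacle is this final identification: the two defect terms produced by \eqref{eq:relativepreanti2} initially sit on opposite sides of $z$, and the commutativity hypothesis \eqref{eq:comu-RB} is precisely what lets one combine them into a single left coefficient of $z$. Once the terms are in a common position, the recognition of the resulting expression as the Lie Rota-Baxter defect is a routine algebraic rearrangement, but the matching of indices and signs must be tracked carefully throughout.
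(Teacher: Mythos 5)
Your reduction of \eqref{eq:relativepreanti1} to the anti-flexible identity \eqref{eq:associatorid}, and of \eqref{eq:relativepreanti2} to the condition $D(x,y;\alpha,\beta)\cdot_{\alpha\beta,\gamma}z+z\cdot_{\gamma,\beta\alpha}D(y,x;\beta,\alpha)=0$, is correct, as is the use of \eqref{eq:comu-RB} to move both defect terms to the left of $z$. The proof breaks at the final identification. The combination you arrive at, $D(x,y;\alpha,\beta)+D(y,x;\beta,\alpha)$, is \emph{symmetric} under the exchange $(x,\alpha)\leftrightarrow(y,\beta)$; expanded, it reads
\begin{equation*}
\varphi_{\alpha}(x)\cdot_{\alpha,\beta}\varphi_{\beta}(y)+\varphi_{\beta}(y)\cdot_{\beta,\alpha}\varphi_{\alpha}(x)
-\varphi_{\alpha\beta}\bigl(\varphi_{\alpha}(x)\cdot_{\alpha,\beta}y+x\cdot_{\alpha,\beta}\varphi_{\beta}(y)\bigr)
-\varphi_{\beta\alpha}\bigl(\varphi_{\beta}(y)\cdot_{\beta,\alpha}x+y\cdot_{\beta,\alpha}\varphi_{\alpha}(x)\bigr),
\end{equation*}
which is the Rota--Baxter defect for the \emph{anticommutator}. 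The Lie Rota--Baxter defect $[\varphi_{\alpha}(x),\varphi_{\beta}(y)]_{\alpha,\beta}-\varphi_{\alpha\beta}([\varphi_{\alpha}(x),y]_{\alpha,\beta}+[x,\varphi_{\beta}(y)]_{\alpha,\beta})$ is the \emph{antisymmetric} combination $D(x,y;\alpha,\beta)-D(y,x;\beta,\alpha)$. These two expressions are not equal up to an overall sign, so the ``routine algebraic rearrangement'' you invoke does not produce \eqref{eq:RB_Lie}.

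This is not a slip you can absorb by re-tracking signs: your own (correct) computation shows that \eqref{eq:relativepreanti2} is equivalent, under \eqref{eq:comu-RB}, to the \emph{symmetrized} defect annihilating every $z$, which is a genuinely different condition from \eqref{eq:RB_Lie}. The paper lands on the antisymmetric combination only because it quotes Eq.~\eqref{eq:generalizedRB} verbatim, in which the two defect terms carry opposite relative signs (it reads $-D(x,y;\alpha,\beta)\cdot_{\alpha\beta,\gamma}z+z\cdot_{\gamma,\beta\alpha}D(y,x;\beta,\alpha)=0$); but that relative sign is itself inconsistent with the four displayed identities in the proof of Proposition~\ref{prop:GRB-pre-anti-flexible}, which yield the $+$ sign you found. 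In other words, your derivation exposes a sign discrepancy in the source rather than resolving it, and as written it establishes equivalence with the symmetrized condition, not with the Lie Rota--Baxter identity. A secondary, shared weakness: even with the right combination, ``$w\cdot_{\alpha\beta,\gamma}z=0$ for all $z$'' gives $w=0$ only when $A$ has no nonzero left annihilators; the paper's ``trivial algebra or Rota--Baxter'' dichotomy glosses over the same point, so I weight this less heavily than the misidentification above.
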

\begin{proof}
According to Proposition~\ref{prop:GRB-pre-anti-flexible}, 
the linear maps family $\varphi_{\alpha}$ satisfying 
Eq~\eqref{eq:generalizedRB} and 
Eq.~\eqref{eq:comu-RB} we then have  for 
any $x,y,z\in A$ and any $\alpha, \beta, \gamma\in \Omega_c$,
\begin{eqnarray}
z\cdot_{_{\gamma, \alpha\beta}}
([\varphi_{_{\alpha}}(x), \varphi_{_{\beta}}(y)]_{_{\alpha, \beta}}
-\varphi_{_{\alpha\beta}}([x, \varphi_{_{\beta}}(x)]_{_{\alpha, \beta}}+
[\varphi_{_{\alpha}}(x), y]_{_{\alpha, \beta}}))=0.
\end{eqnarray}
Hence $(A, \cdot_{\omega_1, \omega_2}, \Omega_c)$ is 
a trivial relative algebra (i.e for any $x,y\in A$ and any 
$\alpha, \beta\in \Omega_c$, $x\cdot_{_{\alpha, \beta}}y=0$) or 
$\varphi_{_{\alpha}}$ satisfying Eq.~\eqref{eq:RB_Lie}.
Notice that is the case that 
$(A, \cdot_{\omega_1, \omega_2}, \Omega_c)$ is 
a trivial relative algebra, $\varphi_{_{\alpha}}$ 
satisfy Eq.~\eqref{eq:RB_Lie}. Therefore, 
$\varphi_\alpha$ 
is a Rota-Baxter operator
on the underlying relative Lie algebra of the relative 
anti-flexible algebra $(A, \cdot_{\omega_1, \omega_2}, \Omega_c)$.
\end{proof}

\bigskip
\noindent
{\bf Acknowledgments.}   
This work is supported by the LPMC of Nankai University
and Nankai ZhiDe Foundation.

\end{document}